%% file: main.tex
\documentclass[onefignum,onetabnum]{siamart250211}

\input{ex_shared}

\ifpdf
\hypersetup{
  pdftitle={Delta-VFE pivoted Cholesky},
  pdfauthor={L. Schaub and P. Zaspel}
}
\fi

\myexternaldocument{ex_supplement}

\begin{document}
\maketitle

\input{Sections/00_abstract}
\input{Sections/01_intro}
\input{Sections/02_basics}
\input{Sections/02_PCholesky}
\input{Sections/04_Objective_Aware_Pivoting}
\input{Sections/05_Efficient_Realisation}

\input{Sections/06_Interpretation}
\input{Sections/07_experiments}

\input{Sections/08_conclusion_outlook}
\newpage
\input{Sections/acknowledgment}
\bibliographystyle{siam}
\bibliography{review}
\appendix
\input{Sections/09_appendix}

\newpage
\newpage
\newpage

\end{document}

%% file: ex_shared.tex
\usepackage{bm}
\usepackage{hyperref}
\usepackage{caption}
\usepackage{subcaption}
\usepackage{aligned-overset}

\usepackage{lipsum}
\usepackage{amsfonts}
\usepackage{graphicx}
\usepackage{epstopdf}
\usepackage{algorithmic}

\usepackage[textwidth=2.8cm]{todonotes}
\usepackage{etoolbox}
\makeatletter
\patchcmd{\@addmarginpar}{\ifodd\c@page}{\iftrue}{}{}
\makeatother

\newcommand{\R}{\mathbb{R}}

\newcommand{\tr}{\mathrm{tr}}

\DeclareMathOperator*{\argmax}{argmax}

\ifpdf
  \DeclareGraphicsExtensions{.eps,.pdf,.png,.jpg}
\else
  \DeclareGraphicsExtensions{.eps}
\fi

\newsiamremark{remark}{Remark}
\newsiamremark{hypothesis}{Hypothesis}
\crefname{hypothesis}{Hypothesis}{Hypotheses}
\newsiamthm{claim}{Claim}

\headers{$\Delta$-VFE Pivoted Cholesky}{L. Schaub and P. Zaspel}

\title{Variational Free Energy Pivot Selection for Pivoted Cholesky}

\author{Louise Schaub\thanks{School of Mathematics and Natural Sciences, University of Wuppertal (\email{schaub@uni-wuppertal.de}, \email{zaspel@uni-wuppertal.de}).}
\and Peter Zaspel\footnotemark[1]}

\usepackage{amsopn}
\DeclareMathOperator{\diag}{diag}

\makeatletter
\newcommand*{\addFileDependency}[1]{
  \typeout{(#1)}
  \@addtofilelist{#1}
  \IfFileExists{#1}{}{\typeout{No file #1.}}
}
\makeatother

\newcommand*{\myexternaldocument}[1]{%
    \externaldocument{#1}%
    \addFileDependency{#1.tex}%
    \addFileDependency{#1.aux}%
}

%% file: Sections/00_abstract.tex
\begin{abstract}
Pivoted Cholesky factorizations construct low-rank approximations of symmetric positive definite matrices by sequentially selecting pivots from the residual diagonal. Classical greedy and randomized rules, such as randomly pivoted Cholesky, target the algebraic trace-norm error of the residual. In many applications, however, the matrix enters a nonlinear matrix functional whose value, not the trace-norm error, determines solution quality, and residual-based rules ignore this structure. 
We derive a pivot rule that maximizes the exact one-step change of such a functional under Cholesky-consistent rank-1 updates, for a functional combining log-determinant, quadratic, and trace terms. 
This functional arises as the variational free energy in Gaussian process regression, where the matrix is a kernel matrix. The resulting per-step gain admits a closed-form additive decomposition into complexity, data-fit, and trace contributions, and is used directly as a pivot-selection criterion. We refer to the resulting method as $\Delta$-VFE pivoted Cholesky. At each iteration, the criterion is evaluated on a batch of $s$ candidate pivots sampled proportionally to the residual diagonal via incremental Woodbury updates, at a total cost of $\mathcal{O}(snr^2)$ for an $n\times n$ matrix and target rank $r$. This matches the asymptotic complexity of randomly pivoted Cholesky up to the batch factor $s$. Cholesky-consistent rank-1 updates yield monotonically non-decreasing functional values, and the proposed rule maximizes the per-step gain among them. Numerical experiments show improved objective values and predictive accuracy at low to moderate ranks compared to classical and randomly pivoted Cholesky, while preserving trace-norm approximation quality.
\end{abstract}

%% file: Sections/01_intro.tex
\section{Introduction}
Low-rank approximations of symmetric positive definite matrices are a central tool in numerical linear algebra and scientific computing. Among these, pivoted Cholesky factorizations \cite{harbrecht.2012, higham.1990, higham.2009} provide an efficient and incremental mechanism for constructing approximations of the form $K \approx \widetilde{K}= LL^\top$ by selecting pivots sequentially from the diagonal of the residual matrix. Classical greedy pivoted Cholesky and its randomized variant RPCholesky~\cite{chen.2025} select pivots according to residual diagonal magnitudes, a strategy that is well understood and closely connected to Nystr\"om-type approximations~\cite{halko.2011}. 

In many applications, however, the matrix $K$ does not appear in isolation, but rather through nonlinear matrix functionals of its low-rank approximation $\widetilde{K}$. In such settings, residual-based pivot rules are motivated solely by the approximation error $\tr(K - \widetilde{K})$ \cite{gittens.2013}. When the relevant functional also includes other terms, this criterion neglects how the approximation interacts with the associated operator structure, which can lead to systematically suboptimal approximations.

This mismatch is particularly pronounced in Gaussian process regression, where the natural quality measure is the marginal log-likelihood $\mathcal{F}(K)$, which costs $\mathcal{O}(n^3)$ for the number of data points $n$ to evaluate exactly. When $K$ is replaced by a rank-$r$ low-rank approximation $\widetilde{K} \preceq K$, the tightest computable surrogate is the \textit{variational free energy} (VFE) $\mathcal{L}(\widetilde{K})$ as introduced by Titsias~\cite{Titsias.2009}. This poses a lower bound on $\mathcal{F}(K)$, that is tight when $\widetilde{K} = K$ and combines log-determinant, quadratic data-fit, and trace components. Of these, only the trace term is targeted by residual-based pivot rules. 
Therefore, the quadratic term, which depends on the data vector $\bm{y}$, and the log-determinant term are not targeted by any residual-based pivot selection rule. 
This raises the following question: \\ \textit{How should pivot selection be performed when the goal is to optimize a matrix functional rather than purely algebraic approximation error?}

In this work, we answer this question by deriving the exact change in $\mathcal{L}(\widetilde{K})$ induced by a rank-1 update that preserves positive definiteness and Cholesky consistency. This leads to a new greedy pivot selection method, which we call \textit{$\Delta$-VFE pivoted Cholesky}. At each iteration, it selects the candidate maximizing the resulting gain in $\mathcal{L}(\widetilde{K})$. The resulting per-update gain admits a closed-form expression combining a log-determinant contribution, a quadratic data-alignment term, and a trace-reduction term. Despite the non-algebraic structure of the gain formula, we show how it can be evaluated for $s$ candidates and $n$ data points in $\mathcal{O}(snr^2)$ total cost via incremental Woodbury updates, matching the asymptotic complexity of RPCholesky, $\mathcal{O}(nr^2)$, up to the batch factor $s$. We further prove that the resulting pivot rule produces a monotonically non-decreasing sequence of $\mathcal{L}(\widetilde{K})$ values.

We validate the approach on benchmark problems, analyzing three measures. First, the proposed pivot rule yields improved values of the VFE bound $\mathcal{L}(\widetilde{K})$, with smaller deviations from $\mathcal{F}(K)$ at lower ranks. Second, the trace-norm approximation error remains comparable to that of RPCholesky throughout, so the gains in the first measure come at no cost in standard kernel approximation quality. Third, this gain in functional value translates into reduced solution error of the regularized linear system arising in Gaussian process regression at low to moderate ranks. 

\subsection{Related Work}\label{subsec:previousWork}

We briefly review prior work on low-rank kernel approximation, pivot selection strategies, and sparse Gaussian process inference, which together form the foundation of our approach.

\paragraph{Pivoted Cholesky and Low-Rank Approximation}
The pivoted Cholesky decomposition is a classical method for constructing low-rank approximations of symmetric positive-definite matrices. Harbrecht et al.~\cite{harbrecht.2012} analyzed its computational complexity, establishing $\mathcal{O}(nr^2)$ cost and convergence in trace norm. Their analysis highlights that the standard diagonal pivot rule provides a computationally efficient surrogate for the exact trace-reduction maximizer, a distinction we make explicit in Lemma~\ref{lem:trace}. 

Randomized variants of pivoted Cholesky have recently been proposed, including \emph{RPCholesky} by Chen et al.~\cite{chen.2025}, which samples pivots proportionally to the residual diagonal and achieves near-optimal trace-norm guarantees. Epperly et al.~\cite{epperly.2025} further developed accelerated blocked variants with significant practical speedups. A broader overview of randomized low-rank approximation methods is given by Martinsson and Tropp~\cite{martinsson.2020}. 

In the context of kernel methods, Williams and Seeger~\cite{williams.2000} introduced \emph{Nystr\"om approximations}, which construct low-rank approximations via subsampling. Gittens and Mahoney~\cite{gittens.2013} analyzed the approximation quality of Nystr\"om methods under various sampling strategies and established conditions for near-optimal performance. 

One of these strategies being \emph{ridge leverage scores} sampling, as introduced and examined in~\cite{alaoui.2015, musco.2017}. Ridge leverage score sampling selects columns proportionally to the diagonal of $K(K + \sigma_\varepsilon^2 I)^{-1}$, capturing global point importance under the full kernel matrix. 

In contrast, our proposed method is not only using the matrix and its properties to build a low-rank approximation, but additionally includes a complexity term and a data-fit term.

\paragraph{Objective-Aware and Data-Aware Pivot Selection}
Several works have explored incorporating task-dependent information into pivot selection. Bach and Jordan~\cite{bach.2005} proposed the \emph{Cholesky with side information} (CSI) algorithm, which selects pivots by approximately minimizing a combined objective balancing kernel approximation error and predictive performance. However, evaluating the exact gain in this objective is computationally prohibitive, and the method relies on approximations and look-ahead strategies. Bach~\cite{bach.2013} provides a sharp analysis of low-rank kernel approximations in supervised learning, showing that the rank required to preserve predictive performance scales with the degrees of freedom of the problem. A key insight of their work is that accurate prediction does not require accurate approximation of the kernel matrix itself, but rather alignment with the downstream learning objective. This perspective complements classical trace-based analyses and motivates the design of approximation methods that are directly guided by predictive criteria.

More recently, Schreiter et al.~\cite{schreiter.2016} proposed selecting inducing points, i.e.~the points used to construct the low-rank approximation, using a maximum-error criterion that prioritizes locations with large predictive residuals under the current model. Specifically, at each step their rule selects a candidate maximizing the pointwise predictive error, providing a scalar, magnitude-based score per candidate. 

While these approaches incorporate additional structure into pivot selection, they are typically motivated by surrogate criteria or specific components of the objective rather than derived directly from a global inference objective. Our proposed method, on the other hand, derives an exact per-step gain directly from the variational free energy of the Gaussian process model, yielding a greedy pivot rule that is explicitly aligned with the underlying probabilistic inference objective.

\paragraph{Sparse Gaussian Process Approximation}
\sloppy
Sparse Gaussian process inference aims to reduce the computational cost of Gaussian process models by constructing low-rank approximations based on a subset of inducing variables or inducing points~\cite{liu.2020}. Variational formulations for sparse Gaussian process inference were introduced by Titsias~\cite{Titsias.2009}, who showed that the collapsed VFE provides a lower bound on the log marginal likelihood and admits closed-form optimization over the variational distribution. Qui\~{n}onero-Candela et al.~\cite{quionero.2005} provided a unifying framework for sparse GP methods. 

Burt et al.~\cite{burt.2020} derived convergence rates for VFE approximations and showed that appropriately chosen inducing point sets can achieve near-optimal rates, with greedy constructions such as pivoted Cholesky providing effective practical approximations. Hensman et al.~\cite{hensman.2013} developed scalable optimization of VFE-based models via stochastic methods.

\paragraph{Variational Free Energy \& Pivot Selection}
Most closely related to our work is the \textit{CholQR} algorithm of Cao et al.~\cite{cao.2015}, developed for sparse Gaussian process regression. They derive an exact closed-form expression for the per-candidate change in the VFE under a one-step partial Cholesky augmentation represented through an augmented QR factorization, with the gain decomposed into data-fit, complexity, and trace terms. A scalar quantity equivalent to the per-step gain in our Theorem~\ref{thm:per-pivot-gain} appears in their work. Rather, our contribution is to reinterpret and re-derive this increment within the framework of pivoted Cholesky as an intrinsic rank-1 factorization functional associated with admissible Cholesky-consistent updates.

The algorithmic settings differ substantially. CholQR is designed for sparse GP inducing-point optimization and interleaves local-search swaps with gradient-based hyperparameter updates. Candidate points are screened by a rank-$r$ partial Cholesky surrogate of the residual, with the exact variational gain evaluated only for the proposed swap. An incremental forward-greedy variant is described in their appendix and reported not to improve performance in that setting. In contrast, we work in the fixed-hyperparameter pivoted Cholesky regime, where the primary object is a rank-$r$ kernel factorization for downstream numerical approximation and inference tasks rather than a compact inducing set jointly optimized with model parameters. Candidates are sampled proportionally to the residual diagonal, recovering RPCholesky~\cite{chen.2025} exactly at batch size $s=1$ and thereby connecting the method directly to the randomized pivoted-Cholesky literature. This reframing within a hierarchy of pivot rules together with greedy pivoted Cholesky and RPCholesky, with each level incorporating successively more information about the objective.

\subsection{Outline}
The remainder of the paper is organized as follows. Section~\ref{sec:basics} introduces notation and reviews the necessary background on kernel matrices, Gaussian process regression, and the main functional of this work, the variational free energy. Section~\ref{sec:pcholesky} reviews pivoted Cholesky and establishes the classical baseline, as well as its randomized variant. We derive our proposed pivot rule, $\Delta$-VFE pivoted Cholesky, and the per-pivot gain formula in Section~\ref{sec:objectiveAware} and  develop the algorithmic realization, including Woodbury-based inverse updates and complexity analysis in Section~\ref{sec:realization}. This is followed by an interpretation of the structure of the gain formula and relate it to existing methods in Section~\ref{sec:interpretation}. We present numerical experiments, analyzing the objective performance as well as the approximation and prediction error in Section~\ref{sec:experiments}.

%% file: Sections/02_basics.tex
\section{Review of Basics and Terminology}\label{sec:basics}
In the following, we briefly revisit the necessary basics and define the quantities that are used in this work. 

\paragraph{Kernels}
Since Gaussian processes rely on positive definite kernels, we start with formally defining this notion.
\begin{definition}[Symmetric Positive Definite Kernel \cite{schaback.2006}]\label{def:pdKernel}
     A function $k:\R^d\times \R^d\to\R$ is called \emph{symmetric positive definite kernel} (spd) if 
     \begin{equation*}
         k(\bm{x}_i, \bm{x}_j) = k(\bm{x}_j, \bm{x}_i) 
         \quad 
        \text{and}
        \quad
        \sum_{i=1}^n \sum_{j=1}^n c_i c_j k(\bm{x}_i,\bm{x}_j) > 0
    \end{equation*}
    for any set of $n$ (unique) points $\bm{x}_i,\,\bm{x}_j \in \R^d$ and any choice of numbers $c_i \in \R$. 
\end{definition}
In the following we will refer to spd kernels simply as \textit{kernels}.
From this definition we can further define the kernel matrix. 
\begin{definition}[Kernel Matrix \cite{scholkopf.2002}]\label{def:kernelMatrix}
    Let $k : \mathbb{R}^d \times \mathbb{R}^d \to \mathbb{R}$ be a kernel and let $X := \{\bm{x}_1, \ldots, \bm{x}_n\} \subset\mathbb{R}^d$ with $n \in \mathbb{N}$ be given. The matrix $K \in \mathbb{R}^{n \times n}$ with
    \begin{equation}
        K_{ij} := k(\bm{x}_i, \bm{x}_j), \quad i, j = 1, \ldots, n
    \end{equation}
    is called \emph{kernel matrix} of $k$ with respect to $X$.
\end{definition}
We proceed by defining the kernels of interest for this work. For simplicity we restrict our analysis to the Gaussian and Laplacian kernel. Further kernels can be found e.g.~in \cite{CarlEdwardRasmussenandChristopherK.I.Williams., duvenaud.2014}.   
\begin{definition}[Kernels \cite{CarlEdwardRasmussenandChristopherK.I.Williams., scholkopf.2002}]\label{def:kernels}
    Let $\bm{x},\bm{x}'\in\R^d$ and $\ell>0$. The \emph{Gaussian kernel} is defined as 
    \begin{equation}
        k(\bm{x}, \bm{x}') := \exp \left( -\frac{\|\bm{x}-\bm{x}'\|_2^2}{2\ell^2}\right). \label{eq:gaussianKernel}
    \end{equation}
    The \emph{Laplacian kernel} is defined as 
    \begin{equation}
        k(\bm{x}, \bm{x}') := \exp \left( -\frac{\|\bm{x}-\bm{x}'\|_1}{\ell}\right), \label{eq:laplacianKernel}
    \end{equation}
\end{definition}

\paragraph{Gaussian Process Regression}
We give the most important details on the source of our matrix functional. More details on \textit{Gaussian process regression} (GPR) can be found e.g.~in \cite{CarlEdwardRasmussenandChristopherK.I.Williams.}.

Let $X = \{\bm{x}_1,\dots,\bm{x}_n\} \subset \mathbb{R}^d$ and let $K \in \mathbb{R}^{n\times n}$ denote the associated kernel matrix. We consider scalar observations $y_i \in \mathbb{R}$ modeled as
\begin{equation*}
    y_i = f(\bm{x}_i) + \varepsilon_i,
    \qquad
    \varepsilon_i \overset{\mathrm{i.i.d.}}{\sim} \mathcal{N}(0,\sigma_\varepsilon^2),
\end{equation*}
where $f \sim \mathcal{GP}(0,k)$ and the noise variables $\varepsilon_i$ are independent of $f$. Let $\bm{y} = (y_1,\dots,y_n)^\top$. Then the vector of unknown function values $\bm{f} := (f(\bm{x}_1),\dots,f(\bm{x}_n))^\top$ follows the Gaussian distribution $\mathcal{N}(0,K)$. Since $\bm{y} = \bm{f} + \boldsymbol{\varepsilon}$, and $\boldsymbol{\varepsilon} \sim \mathcal{N}(0,\sigma_\varepsilon^2 I),$ it follows that
\begin{equation*}
    \bm{y} \sim \mathcal{N}(0, K + \sigma_\varepsilon^2 I).
\end{equation*}
We briefly recall the predictive quantities associated with GPR, since they will be used later in the interpretation of the data-alignment term and in the numerical experiments, i.e.~Sections~\ref{sec:interpretation} and~\ref{sec:experiments}.
\begin{definition}[Predictive posterior]
    Let \(X_\star=\{\bm{x}_1^\star,\hdots,\bm{x}_m^\star\}\subset\mathbb{R}^d\) denote test data points and define
    $(K_{\star X})_{ij}:=k(\bm{x}_i^\star,\bm{x}_j)$ and $(K_{\star\star})_{ij}:=k(\bm{x}_i^\star,\bm{x}_j^\star).$
    Under the Gaussian process regression model, the conditional distribution of the unknown function values at the test data points $X_\star$, given the observations $\bm{y}$, is Gaussian with \emph{mean}
    \begin{equation*}
    \bm\mu_\star = K_{\star X}(K+\sigma_\varepsilon^2 I)^{-1}\bm y
    \end{equation*}
    and \emph{covariance}
    \begin{equation*}
    \Sigma_\star
    =
    K_{\star\star}
    -
    K_{\star X}(K+\sigma_\varepsilon^2 I)^{-1}K_{X\star}.
    \end{equation*}
\end{definition}
In particular, on the training data point set itself, $\bm\mu = K(K+\sigma_\varepsilon^2 I)^{-1}\bm y.$
For a low-rank approximation \(\widetilde K_i\), we define the approximate training-set posterior mean
$\widetilde{\bm\mu}_i:=\widetilde K_i(\widetilde K_i+\sigma_\varepsilon^2 I)^{-1}\bm{y}.$

A standard objective for model selection in GPR is the log marginal likelihood, which forms the starting point for the variational free energy introduced below.
\begin{definition}[Log Marginal Likelihood]
    Under the above Gaussian model, the \emph{log marginal likelihood} (LML) is given by
    $\mathcal{F}(K):=\log \mathcal{N}(\bm y \mid 0,\, K + \sigma_\varepsilon^2 I),$
    which can be written explicitly as
    \begin{equation}
        \label{eq:lml}
        \mathcal{F}(K)
        =
        -\frac12
        \left(
        \log |K+\sigma_\varepsilon^2 I|
        +
        \bm y^\top (K+\sigma_\varepsilon^2 I)^{-1}\bm y
        +
        n\log(2\pi)
        \right),
    \end{equation}
where $|\cdot|$ denotes the determinant.
\end{definition}
The LML measures how well the probabilistic model explains the observed data while automatically balancing data fit and model complexity. In GPR, it is commonly used as an objective for determining the kernel parameters, e.g.~$\ell$ and $\sigma_\varepsilon^2$, and model selection, since maximizing the LML corresponds to selecting the covariance structure that renders the observed data most probable under the model. However, evaluating~\eqref{eq:lml} costs $\mathcal{O}(n^3)$ due to the matrix inversion of a dense covariance matrix. This motivates the use of low-rank approximations in large-scale settings. 
A widely used approach is based on Nystr\"om approximations and variational inference. This leads to the \emph{variational free energy}~\cite{Titsias.2009}, which poses a lower bound on $\mathcal{F}(K)$ achievable with a rank-$r$ low-rank approximation  $\widetilde{K} = LL^\top \preceq K$.
\begin{definition}[Variational Free Energy]
    Let $K \in \R^{n\times n}$, $\bm{y} \in \mathbb{R}^n$, and $\sigma_\varepsilon^2 > 0$. For a low-rank approximation $\widetilde{K} \in \R^{n\times n}$ of $K$ with $\widetilde{K} \preceq K$, the \emph{variational free energy} is defined as
    \begin{equation}
        \label{eq:vfe}
        \mathcal{L}(\widetilde{K})
        :=
        -\frac{1}{2}\left(
        \log|\widetilde{K}+\sigma_\varepsilon^2 I|
        + \bm{y}^\top(\widetilde{K}+\sigma_\varepsilon^2 I)^{-1}\bm{y}
        + \frac{1}{\sigma_\varepsilon^2}\tr(K-\widetilde{K})
        + n\log(2\pi)
        \right).
    \end{equation}
\end{definition}
The three included terms, beside the constant, are described as the complexity term, the data-fit term and a trace penalty term \cite{vanderwilk.2019}. 

In the remainder of this work we will use $\mathcal{L}(\widetilde{K})$ and analyse its per step change, from which we will then derive a pivot selection criterion for an objective-aware pivoted Cholesky decomposition.

%% file: Sections/02_PCholesky.tex
\section{Pivoted Cholesky} \label{sec:pcholesky}

We review pivoted Cholesky factorizations and establish the classical baseline against which our objective-aware pivot rule targeting $\mathcal{L}(\widetilde{K})$ of Section~\ref{sec:objectiveAware} is  compared. We present the standard greedy and randomized pivot rules, characterize the exact one-step trace reduction induced by an arbitrary pivot, and clarify the sense in which the greedy rule acts as a tractable proxy for this exact reduction. The section closes by specializing the framework to kernel matrices, which provides the setting for the remainder of our work.

In the following, let $A\in\R^{n\times n}$ be an spd matrix. Pivoted Cholesky \cite{higham.1990, higham.2009, harbrecht.2012} builds the low-rank factor $L_r$ of $A$ incrementally. At each iteration $i$, a pivot index $p$ is selected and a new column is appended to $L_i$ via one step of the Cholesky factorization applied to the current residual. Let
\begin{equation}
\label{eq:residual}
    T_i := A - L_i L_i^\top \succeq 0
\end{equation}
denote the \textit{residual} after $i$ steps, and let $\bm{d}^{(i)} := \mathrm{diag}(T_i)$ be its diagonal.
We write $\bm{t}_p := (T_i)_{:,p}$ for the $p$-th column of the residual and $d_p^{(i)} := (T_i)_{p,p}$ for the $p$-th diagonal entry, so that $\bm{d}^{(i)} = (d_1^{(i)}, \ldots, d_n^{(i)})^\top$. 
The rank-1 update induced by selecting pivot $p$ takes the Schur complement form
\begin{equation}
    T_{i+1} = T_i - \frac{(T_i)_{:,p}\,(T_i)_{p,:}}{(T_i)_{p,p}}
    = T_i - \bm{u}_p \bm{u}_p^\top,
    \quad\text{with}\quad
    \bm{u}_p := \frac{\bm{t}_p}{\sqrt{d_p^{(i)}}},
\label{eq:schur}
\end{equation}

This strategy zeros out the $p$-th diagonal entry and preserves positive definiteness, i.e.\ $T_{i+1} \succeq 0$. After $r$ steps this yields $A \approx L_r L_r^\top$ with $L_r \in \mathbb{R}^{n \times r}$ at $\mathcal{O}(n r^2)$ total cost~\cite{harbrecht.2012}.

\begin{remark}
    Different conventions appear in the literature. Harbrecht et al.~\cite{harbrecht.2012} work with a square lower-triangular factor $\widetilde{L}_r$ obtained via explicit symmetric row/column permutations, giving the factorization $P^\top A P = \widetilde{L}_r \widetilde{L}_r^\top$. In the randomized linear algebra and Nystr\"om literature~\cite{chen.2025, gittens.2013, martinsson.2020}, the approximation is instead represented either via a tall-thin factor $L_r \in \mathbb{R}^{n \times r}$ with rows in the original indexing, or directly through the column-subset form $K(:,Z) K(Z,Z)^\dagger K(Z,:)$ for a pivot set $Z$. In neither case is a permutation matrix carried through the algorithm. The two viewpoints are related by $L_r = P\widetilde{L}_r$, where $P$ brings the pivot set $Z$ to the leading block. We adopt the tall-thin form throughout to avoid carrying $P$ through subsequent derivations. The only place where the permutation view is invoked is Remark~\ref{rem:nystrom}, where identifying $T_i$ as a Schur complement of $K$ with respect to $Z$ requires viewing $Z$ as a leading block.
    \hfill$\triangle$
\end{remark}

\paragraph{The Greedy Pivot Rule}
Standard realizations of the pivoted Cholesky factorization select the next pivot as the index of the largest remaining diagonal residual~\cite{harbrecht.2012},
\begin{equation}
\label{eq:greedy}
    p_\star \in \argmax_p\; (T_i)_{p,p},
\end{equation}
motivated by the observation that $(T_i)_{p,p}$ measures the residual diagonal mass at index $p$ under the current approximation.
The resulting algorithm is stated as~Algorithm~\ref{alg:pivotedCholesky}, as the ``\textsc{Greedy}'' mode.

The following lemma gives the exact one-step decrease in $\tr (T_i)$ induced by any candidate pivot, and identifies the pivot that maximizes it. The exact one-step decrease will play a key role in Section~\ref{sec:objectiveAware}, where it appears as one of the three components of our per-update gain formula.

\begin{lemma}[One-Step Trace Decrease]
\label{lem:trace}
Let $T_i$ be symmetric and let $p$ satisfy $(T_i)_{p,p} > 0$.
Then,
\begin{equation*}
\label{eq:trace_decrease}
    \mathrm{tr}(T_{i+1})
    =
    \mathrm{tr}(T_i)
    -
    \frac{(T_i^2)_{p,p}}{(T_i)_{p,p}}.
\end{equation*}
Consequently,
\begin{equation}
\label{eq:trace_maximiser}
    p_\star \in \argmax_p\; \frac{(T_i^2)_{p,p}}{(T_i)_{p,p}}
\end{equation}
maximizes the one-step decrease of $\mathrm{tr}(T_i)$.
\end{lemma}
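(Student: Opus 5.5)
The plan is to compute the trace of the Schur-complement update \eqref{eq:schur} directly, using linearity of the trace and the cyclic identity for rank-one matrices. Since $(T_i)_{p,p}>0$, the update $T_{i+1} = T_i - \bm{u}_p\bm{u}_p^\top$ with $\bm{u}_p = \bm{t}_p/\sqrt{d_p^{(i)}}$ is well defined. Linearity then gives
\begin{equation*}
\tr(T_{i+1}) = \tr(T_i) - \tr(\bm{u}_p\bm{u}_p^\top) = \tr(T_i) - \|\bm{u}_p\|_2^2 = \tr(T_i) - \frac{\bm{t}_p^\top\bm{t}_p}{(T_i)_{p,p}}.
\end{equation*}

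The only remaining step is to identify the numerator $\bm{t}_p^\top\bm{t}_p$ with $(T_i^2)_{p,p}$. For this I will use the symmetry of $T_i$: the $p$-th row equals the $p$-th column, so
\begin{equation*}
(T_i^2)_{p,p} = \sum_{j}(T_i)_{p,j}(T_i)_{j,p} = \sum_j (T_i)_{j,p}^2 = \|\bm{t}_p\|_2^2.
\end{equation*}
This is exactly where the hypothesis that $T_i$ is symmetric enters. In particular, positive semidefiniteness is not needed for the identity itself, only $(T_i)_{p,p}>0$ so that the division makes sense.

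For the consequence \eqref{eq:trace_maximiser}, the one-step decrease $\tr(T_i)-\tr(T_{i+1})$ equals $(T_i^2)_{p,p}/(T_i)_{p,p}$, and $\tr(T_i)$ does not depend on $p$. Hence maximizing the decrease over admissible pivots is equivalent to maximizing this ratio, and any $p_\star$ in the argmax attains the maximal decrease. The argmax should be taken over indices with $(T_i)_{p,p}>0$. If $T_i\succeq 0$ as in \eqref{eq:residual}, an index with zero diagonal has a zero column, so excluding it loses nothing.

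There is no real obstacle here. The only care needed is the bookkeeping that turns $\|\bm{t}_p\|^2$ into a diagonal entry of $T_i^2$ via symmetry, and the remark about restricting to pivots with positive residual diagonal.
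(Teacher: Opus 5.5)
Your proof is correct and follows the paper's argument: linearity of the trace with $\tr(\bm{u}_p\bm{u}_p^\top)=\|\bm{u}_p\|_2^2$, then symmetry of $T_i$ to identify $\|\bm{t}_p\|_2^2$ with $(T_i^2)_{p,p}$. Your remark that the argmax should range only over indices with $(T_i)_{p,p}>0$ (and that, for $T_i\succeq 0$, a zero diagonal entry forces a zero column) is a sound clarification that the paper leaves implicit.
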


\begin{proof}
From~\eqref{eq:schur}, $T_{i+1} = T_i - \bm{u}_p \bm{u}_p^\top$. By linearity of the trace and $\mathrm{tr}(\bm{u}_p \bm{u}_p^\top) = \|\bm{u}_p\|_2^2$,
\begin{equation*}
\mathrm{tr}(T_{i+1})
= \mathrm{tr}(T_i) - \|\bm{u}_p\|_2^2
= \mathrm{tr}(T_i) - \frac{\|\bm{t}_p\|_2^2}{(T_i)_{p,p}}.
\end{equation*}
Since $T_i$ is symmetric,
\begin{equation*}
    \|T_i \bm{e}_p\|_2^2
    = (T_i)_{:,p}^\top(T_i)_{:,p}
    = (T_i^2)_{p,p},
\end{equation*}
which yields the result.
\end{proof}

\begin{remark}
\label{rem:greedy_proxy}
The greedy rule~\eqref{eq:greedy} selects the largest diagonal entry $(T_i)_{p,p}$. In contrast, the exact trace-reduction maximizer in~\eqref{eq:trace_maximiser} depends on $(T_i^2)_{p,p}/(T_i)_{p,p} = \|\bm{u}_p\|_2^2$, which requires access to the full $p$-th column of $T_i$. Harbrecht et al.~\cite{harbrecht.2012} observe that finding the pivot that optimally reduces the trace requires knowledge of the complete matrix, and justify the diagonal rule as a tractable substitute. The justification follows from the positive semi-definiteness of $T_i$, which yields the entrywise bound $|(T_i)_{j,p}|^2 \leq (T_i)_{j,j}\,(T_i)_{p,p}$ and consequently
\begin{equation*}
    \|\bm{u}_p\|_2^2 = \frac{(T_i^2)_{p,p}}{(T_i)_{p,p}} = \frac{1}{(T_i)_{p,p}} \sum_{j=1}^n (T_i)_{j,p}^2 \leq \mathrm{tr}(T_i),
\end{equation*}
so that the diagonal entry $(T_i)_{p,p}$ serves as a computationally cheap proxy for $\|\bm{u}_p\|_2^2$. \hfill$\triangle$
\end{remark}

\paragraph{Randomized Variant}
\textit{Randomly pivoted Cholesky} (RPCholesky), introduced and refined in~\cite{chen.2025,epperly.2025}, replaces the deterministic selection~\eqref{eq:greedy} by sampling pivot indices without replacement proportionally to the residual diagonal $d^{(r)}_p$ at iteration $r$. This yields a stochastic analogue of the diagonal-based selection rule, with provable trace-norm guarantees that hold for arbitrary spd matrices~\cite{chen.2025}. The randomized variant achieves the same $\mathcal{O}(nr^2)$ asymptotic complexity as the deterministic rule.
Algorithm~\ref{alg:pivotedCholesky} summarizes this variant as the ``\textsc{Randomized}'' mode.

\begin{remark}[Specialization to Kernel Matrices and the Nystr\"om Connection]
\label{rem:nystrom}
Throughout the remainder of this work, we specialize the spd matrix $A$ to a kernel matrix $K \in \mathbb{R}^{n \times n}$ arising from a positive-definite kernel evaluated on a point set $X = \{x_1, \ldots, x_n\}$ as defined in Section~\ref{sec:basics}. In this setting, the low-rank factor produced by Algorithm~\ref{alg:pivotedCholesky} corresponds to a Nystr\"om-type approximation~\cite{williams.2000} of rank $r$
\begin{equation*}
    \widetilde{K}_r = L_r L_r^\top = K_{XZ} K_{ZZ}^{-1} K_{ZX},
\end{equation*}
where $Z \subset X$ is the set of selected pivots. This connection is stated e.g.~in~\cite[Section 5.4]{halko.2011}. After reordering indices so that $Z$ occupies the leading block, the residual $T_i = K - \widetilde{K}_r$ coincides with the Schur complement of $K$ with respect to $Z$, and  $\mathrm{tr}(T_i)$ measures the total unexplained variance.
Section~\ref{sec:objectiveAware} builds on this  specialization to derive an objective-aware pivot rule targeting a matrix functional that arises naturally from $\mathcal{L}(\widetilde{K})$ in this setting.\hfill$\triangle$
\end{remark}

\begin{algorithm}[t]
\caption{\textsc{(Randomly) Pivoted Cholesky}} \label{alg:pivotedCholesky}
\begin{algorithmic}[1]
\REQUIRE spd matrix $A \in \mathbb{R}^{n \times n}$,
target rank $r$, mode $\in \{\textsc{greedy}, \textsc{random}\}$
\ENSURE Low-rank factor $L \in \mathbb{R}^{n \times r}$ such that $A \approx LL^\top$
\STATE $\bm{d} \leftarrow \operatorname{diag}(A)$ \COMMENT{Residual diagonal}
\STATE $L \leftarrow [\,]$
\FOR{$j = 1, \dots, r$}
    \IF{mode = \textsc{greedy}}
        \STATE $p \leftarrow \argmax_i\, d_i$
        \COMMENT{Greedy pivot}
    \ELSIF{mode = \textsc{random}}
        \STATE $\displaystyle w_i \leftarrow \frac{{d}_i}{\sum_{\ell} {d}_\ell}$
        \STATE Sample $p$ from $\{1,\dots,n\}$ using probabilities $\{w_i\}$     
        \COMMENT{Random pivot proportional to residual}
    \ENDIF
    \STATE $\bm{v} \leftarrow A_{:,p} - L_{:,\,1:j-1}\,(L_{p,\,1:j-1})^\top$ 
    \STATE $L_{:,j} \leftarrow \bm{v} / \sqrt{v_p}$ 
    \STATE $\bm{d} \leftarrow \bm{d} - L_{:,j}^2$ 
    \COMMENT{Update residual diagonal}
\ENDFOR
\RETURN $L$
\end{algorithmic}
\end{algorithm}

%% file: Sections/04_Objective_Aware_Pivoting.tex
\section{Objective-Aware Pivoting}
\label{sec:objectiveAware}

In this section we derive the $\Delta$-VFE pivoted Cholesky pivot rule resulting from the exact per-step change of the VFE, which we call \textit{$\Delta$-VFE pivoted Cholesky}. We do so by maximizing the exact one-step improvement of $\mathcal{L}(\widetilde{K}) = -\frac{1}{2}\left( \log|\widetilde{K}+\sigma_\varepsilon^2 I| + \bm{y}^\top (\widetilde{K}+\sigma_\varepsilon^2I )^{-1}\bm{y} + \frac{1}{\sigma_\varepsilon^2}\tr (K - \widetilde{K}) + n\log(2\pi) \right)$ over Cholesky-consistent rank-1 updates. In the following, we drop the constant term $n\log(2\pi)$. Although this functional originates from variational inference in GPR, our focus here is on its numerical structure. Our proposed per-pivot gain formula balances contributions from the three terms described in Section~\ref{sec:basics}, i.e.~covariance complexity, data alignment, and residual variance. 

Since $\mathcal{L}(\widetilde{K}) \leq \mathcal{F}(K)$, we seek a pivot rule that targets the maximization of it in each iteration.
For the derivation of our per-pivot gain formula, we first analyze the elements of iterations $i$ and $i+1$. 
Therefore, let $\widetilde K_i=L_iL_i^\top$ be a rank-$i$ approximation of $K$ and define the residual, diagonal element and regularized covariance matrix respectively as
\begin{equation*}
    T_i := K-\widetilde K_i \succeq 0, 
    \quad
    \bm{d}^{(i)} := \diag(T_i)
    \quad 
    \text{and}
    \quad 
    \Sigma_i := \widetilde{K}_i + \sigma_\varepsilon^2 I.
\end{equation*}
At iteration $i$, we evaluate candidate rank-1 updates of the form
\begin{equation}
    \widetilde K_{i+1} := \widetilde K_i + \bm{u} \bm{u}^\top, 
    \quad 
    \text{where}
    \quad
    \bm{u} \in \mathbb R^n,
\label{eq:candidate-update}
\end{equation}
which induces the updates
\begin{align*}
    \Sigma_{i+1} = \Sigma_i + \bm{u} \bm{u}^\top, \quad \text{and}\quad
    T_{i+1} = T_i - \bm{u} \bm{u}^\top.
\end{align*}
Before we continue with the exact per-step change of $\mathcal{L}(\widetilde{K})$, we introduce three auxiliary lemmas. The first one allows a cheap approach of computing a matrix determinant after a rank-1 update, given we have computed the determinant beforehand.
\begin{lemma}[Matrix Determinant Lemma {\cite[Chapter 18, Section 1]{harville.1998}}] \label{lem:matrixDeterminant}
    Let $A\in\R^{n\times n}$ be a regular matrix and $\bm{u}, \bm{v} \in \R^{n}$ be two column vectors. Then
    \begin{equation*}
        |A + \bm{u}\bm{v}^\top| = |A| (1+\bm{v}^\top A^{-1} \bm{u}).
    \end{equation*}
\end{lemma}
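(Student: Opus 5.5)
The matrix determinant lemma follows from factoring $A + \bm{u}\bm{v}^\top$ through the regular matrix $A$, which reduces the statement to computing the determinant of a rank-one perturbation of the identity. Concretely, since $A$ is regular,
\begin{equation*}
    A + \bm{u}\bm{v}^\top = A\,\bigl(I + A^{-1}\bm{u}\bm{v}^\top\bigr).
\end{equation*}
Multiplicativity of the determinant gives $|A+\bm{u}\bm{v}^\top| = |A|\,|I + \bm{w}\bm{v}^\top|$ with $\bm{w} := A^{-1}\bm{u}$. It therefore suffices to show
\begin{equation*}
    |I + \bm{w}\bm{v}^\top| = 1 + \bm{v}^\top\bm{w}.
\end{equation*}

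For this identity we use the block-matrix argument. Consider the $(n+1)\times(n+1)$ matrix
\begin{equation*}
    M := \begin{pmatrix} I & -\bm{w} \\ \bm{v}^\top & 1 \end{pmatrix}.
\end{equation*}
We factor $M$ in two ways, in each case as a product of block unit-triangular matrices (determinant one) and a block-diagonal matrix.

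First, eliminating the lower-left block with the upper-left identity block gives
\begin{equation*}
    M = \begin{pmatrix} I & 0 \\ \bm{v}^\top & 1 \end{pmatrix}\begin{pmatrix} I & -\bm{w} \\ 0 & 1+\bm{v}^\top\bm{w} \end{pmatrix},
\end{equation*}
so $|M| = 1+\bm{v}^\top\bm{w}$.

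Second, eliminating the upper-right block with the scalar $1$ gives
\begin{equation*}
    M = \begin{pmatrix} I & -\bm{w} \\ 0 & 1 \end{pmatrix}\begin{pmatrix} I+\bm{w}\bm{v}^\top & 0 \\ \bm{v}^\top & 1 \end{pmatrix},
\end{equation*}
so $|M| = |I+\bm{w}\bm{v}^\top|$.

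Each factorization is checked by direct block multiplication. Equating the two expressions for $|M|$ yields the identity, and substituting $\bm{w} = A^{-1}\bm{u}$ gives the claim.

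The only delicate step is confirming the two block factorizations, which is routine but is where sign errors could occur. A fallback argument avoids them. For $\bm{w}\neq 0$, the matrix $\bm{w}\bm{v}^\top$ has rank at most one. Its eigenvalues are therefore $\bm{v}^\top\bm{w}$, with eigenvector $\bm{w}$, together with $0$ of multiplicity $n-1$, since its kernel $\{\bm{v}\}^\perp$ has dimension at least $n-1$. Hence the eigenvalues of $I+\bm{w}\bm{v}^\top$ are $1+\bm{v}^\top\bm{w}$ and $1$ repeated $n-1$ times, and their product is again $1+\bm{v}^\top\bm{w}$. This eigenvalue count must be justified via algebraic multiplicities, for instance from the characteristic polynomial $\lambda^{n-1}(\lambda-\bm{v}^\top\bm{w})$. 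The case $\bm{w}=0$ is trivial.
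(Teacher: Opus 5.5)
Your proof is correct. Both block factorizations of $M$ multiply out to $M$: the bottom-right entry $-\bm{v}^\top\bm{w}+(1+\bm{v}^\top\bm{w})$ of the first product collapses to $1$, and the top-left block $(I+\bm{w}\bm{v}^\top)-\bm{w}\bm{v}^\top$ of the second collapses to $I$. The block-triangular factors have the determinants you state, and the reduction $A+\bm{u}\bm{v}^\top=A(I+A^{-1}\bm{u}\bm{v}^\top)$ is legitimate because $A$ is regular.

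The paper gives no proof of this lemma; it imports it from Harville's textbook, so there is no in-paper argument to compare against. Your bordered-matrix computation is the standard derivation: evaluate one partitioned determinant through each of its two Schur complements. You can shorten it by bordering $A$ directly. For
\begin{equation*}
    N := \begin{pmatrix} A & -\bm{u} \\ \bm{v}^\top & 1 \end{pmatrix},
\end{equation*}
the Schur complement of $A$ gives $|N| = |A|(1+\bm{v}^\top A^{-1}\bm{u})$, and the Schur complement of the scalar $1$ gives $|N| = |A+\bm{u}\bm{v}^\top|$. This removes the preliminary factorization through $A$.

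The eigenvalue fallback is not needed. It is sound once you add one step: the eigenvalue $0$ has geometric multiplicity at least $n-1$, which bounds its algebraic multiplicity from below, and the trace $\bm{v}^\top\bm{w}$ of $\bm{w}\bm{v}^\top$ then pins down the remaining eigenvalue.
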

The second lemma focuses on inverses of rank-1 updates to a matrix, where the inverse of the matrix has been computed already.
\begin{lemma}[Sherman-Morrison Formula {\cite[Section 2.1.4]{golub.2013}}]\label{lem:ShermanMorrison}
    Let $A\in \R^{n\times n }$ be regular and $\bm{u}, \bm{v} \in \R^n$ be two column vectors. Then $A + \bm{uv}^\top$ is invertible, iff $1 + \bm{v}^\top A^{-1} \bm{u} \neq 0$. Then,
    \begin{equation*}
        (A + \bm{uv}^\top)^{-1} = A^{-1} - \frac{A^{-1}\bm{uv}^\top A^{-1}}{1+\bm{v}^\top A^{-1}\bm{u}}.
    \end{equation*}
\end{lemma}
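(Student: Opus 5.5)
The plan is a direct verification: multiply the proposed inverse by $A+\bm{u}\bm{v}^\top$ and check that the product is $I$. For the converse, use Lemma~\ref{lem:matrixDeterminant}.

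Set $\gamma := 1+\bm{v}^\top A^{-1}\bm{u}$, a scalar.

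\textbf{Invertibility criterion.} By Lemma~\ref{lem:matrixDeterminant}, $|A+\bm{u}\bm{v}^\top| = |A|\,\gamma$. Since $A$ is regular, $|A|\neq 0$. Hence $A+\bm{u}\bm{v}^\top$ is invertible if and only if $\gamma\neq 0$, which gives both directions of the ``iff'' at once.

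\textbf{Inverse formula.} Assume $\gamma\neq0$ and set $B := A^{-1} - \gamma^{-1}A^{-1}\bm{u}\bm{v}^\top A^{-1}$. Expanding $(A+\bm{u}\bm{v}^\top)B$ gives four terms:
\begin{equation*}
I + \bm{u}\bm{v}^\top A^{-1} - \gamma^{-1}\bm{u}\bm{v}^\top A^{-1} - \gamma^{-1}\bm{u}\,(\bm{v}^\top A^{-1}\bm{u})\,\bm{v}^\top A^{-1}.
\end{equation*}
The key step is that $\bm{v}^\top A^{-1}\bm{u}=\gamma-1$ is a scalar, so it can be pulled out of the last term. The three non-identity terms then combine to
\begin{equation*}
\bm{u}\bm{v}^\top A^{-1}\bigl(1-\gamma^{-1}-\gamma^{-1}(\gamma-1)\bigr)=0.
\end{equation*}
Thus $(A+\bm{u}\bm{v}^\top)B=I$. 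For square matrices a right inverse is also a left inverse, so no separate check of $B(A+\bm{u}\bm{v}^\top)=I$ is needed.

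The only point requiring care is treating $\bm{v}^\top A^{-1}\bm{u}$ as a scalar and regrouping the outer products correctly. Otherwise the argument is routine.
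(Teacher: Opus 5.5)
Your proof is correct and complete. The paper gives no proof of its own for this lemma and simply cites Golub and Van Loan, so there is no competing argument to compare against. Your direct multiplication check is the standard verification, and the regrouping via the scalar $\bm{v}^\top A^{-1}\bm{u}=\gamma-1$ is done correctly.

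Invoking Lemma~\ref{lem:matrixDeterminant} for the equivalence is legitimate. That lemma is stated before this one, and its standard proof via a block factorization does not use Sherman--Morrison, so the argument is not circular.

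If you want the lemma to be self-contained without determinants, you can get the ``iff'' directly:
\begin{itemize}
\item If $\gamma\neq0$, your computation already exhibits an inverse.
\item If $\gamma=0$, then $\bm{v}^\top A^{-1}\bm{u}=-1$ forces $\bm{u}\neq\bm{0}$. Hence $A^{-1}\bm{u}\neq\bm{0}$, while $(A+\bm{u}\bm{v}^\top)A^{-1}\bm{u}=\gamma\bm{u}=\bm{0}$, so $A+\bm{u}\bm{v}^\top$ is singular.
\end{itemize}

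One cosmetic point: $\gamma$ and $B$ already denote other quantities in Lemma~\ref{lem:woodbury_update}. Rename them if this argument is inserted into the paper.
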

The third lemma summarizes the determinant and inverse update formulas needed to evaluate the objective increment for the proof of the upcoming theorem.
\begin{lemma}[Determinant and Inverse under Rank-1 Updates]
\label{lem:rank1-updates}
Let $\bm{u}\in\R^{n}$ and $\Sigma_{i+1}=\Sigma_i+\bm{u}\bm{u}^\top$ and $a = \bm{u}^\top \Sigma_i^{-1} \bm{u}.$ Then
\begin{align*}
    \log|\Sigma_{i+1}| &= \log|\Sigma_i| + \log(1+a), \quad \text{and}\\
    \Sigma_{i+1}^{-1}
    &= \Sigma_i^{-1}
    - \frac{\Sigma_i^{-1}\bm{u} \bm{u}^\top \Sigma_i^{-1}}{1+a}.
\end{align*}
\end{lemma}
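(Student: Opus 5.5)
The plan is to apply Lemma~\ref{lem:matrixDeterminant} and Lemma~\ref{lem:ShermanMorrison} with $A=\Sigma_i$ and $\bm{v}=\bm{u}$. Before doing so, we check that their hypotheses hold. Since $\widetilde K_i = L_iL_i^\top \succeq 0$ and $\sigma_\varepsilon^2>0$, the matrix $\Sigma_i = \widetilde K_i + \sigma_\varepsilon^2 I$ is symmetric positive definite, hence regular. Moreover, $\Sigma_i^{-1}$ is also symmetric positive definite, so $a = \bm{u}^\top\Sigma_i^{-1}\bm{u} \ge 0$ for every $\bm{u}\in\R^n$.

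For the determinant identity, Lemma~\ref{lem:matrixDeterminant} gives
\begin{equation*}
|\Sigma_{i+1}| = |\Sigma_i|\,(1+\bm{u}^\top\Sigma_i^{-1}\bm{u}) = |\Sigma_i|\,(1+a).
\end{equation*}
Both factors on the right are strictly positive: $|\Sigma_i|>0$ by positive definiteness, and $1+a\ge 1$. We may therefore take logarithms, which yields $\log|\Sigma_{i+1}| = \log|\Sigma_i| + \log(1+a)$. The same observation shows that $\Sigma_{i+1}$ is itself positive definite, so no sign issue arises.

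For the inverse identity, the invertibility criterion of Lemma~\ref{lem:ShermanMorrison} requires $1+\bm{u}^\top\Sigma_i^{-1}\bm{u}\neq 0$. This holds because $1+a\ge 1$. Substituting $A=\Sigma_i$ and $\bm{v}=\bm{u}$ into the Sherman--Morrison formula then gives the stated expression for $\Sigma_{i+1}^{-1}$ directly.

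The only real point of care, and the step I would expect to be the main obstacle, is verifying positivity. Specifically, we need $\Sigma_i\succ 0$ so that $a\ge 0$, which simultaneously legitimizes the logarithm and guarantees the nonvanishing denominator. This rests on $\sigma_\varepsilon^2>0$ together with the positive semidefiniteness of $\widetilde K_i$. Beyond this, both identities are immediate specializations of the two cited lemmas.
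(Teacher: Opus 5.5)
Your proof is correct and matches the paper's argument: both identities come from specializing the matrix determinant lemma and the Sherman--Morrison formula to $A=\Sigma_i$, $\bm{v}=\bm{u}$. Your explicit check that $\Sigma_i\succ 0$ (so $a\ge 0$, the logarithm is defined, and $1+a\neq 0$) verifies hypotheses the paper leaves implicit, and it correctly uses the section's standing setup $\widetilde K_i=L_iL_i^\top\succeq 0$ with $\sigma_\varepsilon^2>0$.
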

\begin{proof}
We begin with the determinant identity and get
\begin{align*}
    \log | \Sigma_{i+1}| &= \log | \Sigma_i + \bm{u} \bm{u}^\top| \\ 
    \overset{\text{Lemma \ref{lem:matrixDeterminant}}}&{=} \log \left(|\Sigma_i| ( 1+ \bm{u}^\top \Sigma_i^{-1} \bm{u}) \right)\\
    &= \log|\Sigma_i| + \log\left(1 + \bm{u}^\top \Sigma_i^{-1} \bm{u}\right).
\end{align*}
Setting $a = \bm{u}^\top \Sigma_i^{-1}\bm{u}$ yields the result.

The inverse update follows from the Sherman–Morrison formula applied to
$\Sigma_{i}$, i.e. 
\begin{align*}
    \Sigma_{i+1}^{-1} = \left(\Sigma_i + \bm{u}\bm{u}^\top\right)^{-1}
    \overset{\text{Lemma}~\ref{lem:ShermanMorrison}}{=} \Sigma_i^{-1} - \frac{\Sigma_i^{-1}\bm{u} \bm{u}^\top \Sigma_i^{-1}}{1+ \bm{u}^\top \Sigma_i^{-1}\bm{u}} 
\end{align*}
Using the definition of $a$ leads to the second equation and concludes the proof. 
\end{proof}

In the following theorem, we give the exact per-step change of $\mathcal{L}(\widetilde{K})$ induced by the
rank-1 update~\eqref{eq:candidate-update}.

\begin{theorem}[Additive Decomposition of the Exact Rank-1 VFE Change]
\label{thm:per-pivot-gain}
Let $\sigma_\varepsilon^2>0$, $\widetilde{K}_i \in \R^{n\times n}$ with $\widetilde{K}_i\preceq K$ and define $\Sigma_i = \widetilde{K}_i + \sigma_\varepsilon^2I$.
For any rank-1 update $\widetilde K_{i+1}=\widetilde K_i+\bm{u} \bm{u}^\top$ with $\bm{u}\in\R^n$, the exact per-step change in the VFE admits the closed-form additive decomposition 
\begin{equation}
    \Delta \mathcal L
    :=
    \mathcal L(\widetilde K_{i+1})-\mathcal L(\widetilde K_i)
    =
    -\frac{1}{2}\left(
    \log(1+a)
    -\frac{b^2}{1+a}
    -\frac{1}{\sigma_\varepsilon^2}\|\bm{u}\|_2^2
    \right).
    \label{eq:deltaVFE}
\end{equation}
with $a := \bm{u}^\top \Sigma_i^{-1} \bm{u},$ and $b := \bm{u}^\top \Sigma_i^{-1} \bm y.$
\end{theorem}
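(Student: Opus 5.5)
Since $\Sigma_{i+1}=\Sigma_i+\bm{u}\bm{u}^\top$ and $T_{i+1}=T_i-\bm{u}\bm{u}^\top$, we will compute the change in each of the three non-constant terms of $\mathcal{L}$ in \eqref{eq:vfe} separately and then add them. The constant $n\log(2\pi)$ cancels in the difference. Because $\sigma_\varepsilon^2>0$ and $\widetilde K_i\succeq 0$ (as $\widetilde K_i = L_iL_i^\top$), the matrix $\Sigma_i$ is symmetric positive definite. Hence $a=\bm{u}^\top\Sigma_i^{-1}\bm{u}\ge 0$ and $1+a>0$, so Lemma~\ref{lem:rank1-updates} applies without any extra invertibility hypothesis and $\log(1+a)$ is well defined.

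\emph{Complexity term.} The first identity of Lemma~\ref{lem:rank1-updates} gives directly
\[
\log|\Sigma_{i+1}|-\log|\Sigma_i|=\log(1+a).
\]

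\emph{Data-fit term.} We sandwich the Sherman--Morrison expression of Lemma~\ref{lem:rank1-updates} between $\bm y^\top$ and $\bm y$. Symmetry of $\Sigma_i^{-1}$ gives $\bm y^\top\Sigma_i^{-1}\bm{u}=\bm{u}^\top\Sigma_i^{-1}\bm y=b$, so
\[
\bm y^\top\Sigma_{i+1}^{-1}\bm y-\bm y^\top\Sigma_i^{-1}\bm y=-\frac{b^2}{1+a}.
\]

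\emph{Trace term.} By linearity of the trace, exactly as in the proof of Lemma~\ref{lem:trace},
\[
\tr(K-\widetilde K_{i+1})-\tr(K-\widetilde K_i)=-\tr(\bm{u}\bm{u}^\top)=-\|\bm{u}\|_2^2 .
\]
After multiplying by $1/\sigma_\varepsilon^2$, this contributes $-\|\bm{u}\|_2^2/\sigma_\varepsilon^2$.

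Summing the three differences and multiplying by the prefactor $-\tfrac12$ yields \eqref{eq:deltaVFE}.

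There is no genuine obstacle here; the only point needing care is well-definedness. The hypothesis $\widetilde K_i\preceq K$ is needed only so that $\mathcal L(\widetilde K_i)$ is defined as a VFE, and the identity itself is purely algebraic in $\bm{u}$. If one also wants $\mathcal L(\widetilde K_{i+1})$ to be a legitimate VFE, this requires $\widetilde K_{i+1}\preceq K$. That condition holds for the Cholesky-consistent choice $\bm{u}=\bm{u}_p$ from \eqref{eq:schur}, since then $T_{i+1}\succeq 0$. We will note this in passing but it is not needed for the formula itself.
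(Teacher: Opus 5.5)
Your proposal is correct and matches the paper's proof step for step. You split $\Delta\mathcal L$ into the log-determinant, data-fit and trace differences, handle the first two with Lemma~\ref{lem:rank1-updates} (using symmetry of $\Sigma_i^{-1}$ to obtain $b^2$), and handle the third by linearity of the trace. Your well-definedness remarks are correct and make explicit what the paper leaves implicit: $\Sigma_i \succ 0$ and hence $1+a>0$ (this rests on the standing assumption $\widetilde K_i = L_iL_i^\top \succeq 0$, not on $\widetilde K_i \preceq K$ alone), and $\widetilde K_{i+1}\preceq K$ is needed only for $\mathcal L(\widetilde K_{i+1})$ to be a genuine VFE.
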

\begin{proof}
    At iteration $i$, we have
    \begin{equation}
    \mathcal L(\widetilde K_i)
        =
        -\frac12\left(
        \log|\Sigma_i|
        +
        \bm y^\top\Sigma_i^{-1}\bm y
        +
        \frac{1}{\sigma_\varepsilon^2}\tr(T_i)
        \right).
    \label{eq:Li}
    \end{equation}
    Analogously, after applying the rank-1 update,
    \begin{equation}
        \mathcal L(\widetilde K_{i+1})
        =
        -\frac12\left(
        \log|\Sigma_{i+1}|
        +
        \bm y^\top\Sigma_{i+1}^{-1}\bm y
        +
        \frac{1}{\sigma_\varepsilon^2}\tr(T_{i+1})
        \right).
    \label{eq:Liplus1}
    \end{equation}
    Subtracting~\eqref{eq:Li} from~\eqref{eq:Liplus1} gives
    \begin{align}
        \Delta \mathcal L
        &=
        \mathcal L(\widetilde K_{i+1})-\mathcal L(\widetilde K_i) \nonumber \\
        &=
        -\frac12\left(
        \log|\Sigma_{i+1}|-\log|\Sigma_i|
        +
        \bm y^\top(\Sigma_{i+1}^{-1}-\Sigma_i^{-1})\bm y
        +
        \frac{1}{\sigma_\varepsilon^2}
        (\tr(T_{i+1})-\tr(T_i))
        \right),
    \label{eq:DeltaLDecomp}
    \end{align}
    where we evaluate the three differences separately.
    
    \noindent
    \emph{(i) Complexity term.}
    By the first property of Lemma~\ref{lem:rank1-updates},
    \begin{align*}
        \Delta\mathcal{L}_{\text{complexity}} &:= \log|\Sigma_{i+1}|-\log|\Sigma_i| =\log |\Sigma_i| + \log \left(1 + a \right) - \log| \Sigma_i|
        \\
        &=
        \log(1+a).
    \end{align*}
    
    \noindent
    \emph{(ii) Data-fit term.}
    Using the second property of Lemma~\ref{lem:rank1-updates}, we get
    \begin{align*}
        \Sigma_{i+1}^{-1} - \Sigma_i^{-1}
        &= 
        \Sigma_i^{-1}
        -
        \frac{\Sigma_i^{-1}\bm{u} \bm{u}^\top \Sigma_i^{-1}}{1+a} - \Sigma_i^{-1}
        = -\frac{\Sigma_i^{-1}\bm{u} \bm{u}^\top \Sigma_i^{-1}}{1+a}.
    \end{align*}
    Multiplying from left and right by $\bm y$ yields
    \begin{align*}
        \Delta \mathcal{L}_{\text{data-fit}} &:=  \bm y^\top(\Sigma_{i+1}^{-1}-\Sigma_i^{-1})\bm y
        =
        -
        \frac{\bm{y}^\top \Sigma_i^{-1}\bm{u}
        \bm{u}^\top \Sigma_i^{-1}\bm{y}}{1+a}
         \nonumber \\
        &=
        - \frac{(\bm{u}^\top \Sigma_i^{-1}\bm y)^2}{1+a}.
    \end{align*}
    Defining $b := \bm{u}^\top \Sigma_i^{-1}\bm y$ gives
    \begin{equation*}
        \Delta\mathcal{L}_{\text{data-fit}} = \bm y^\top(\Sigma_{i+1}^{-1}-\Sigma_i^{-1})\bm y
        =
        -
        \frac{b^2}{1+a}.
    \end{equation*}

    \noindent
    \emph{(iii) Trace term.}
    From $T_{i+1}=T_i-\bm{u}\bm{u}^\top$ and the linearity of the trace operator we obtain
    \begin{equation*}
        \Delta \mathcal{L}_{\text{trace}} = \tr(T_{i+1})-\tr(T_i)
        := -\tr(\bm{u}\bm{u}^\top)
        = -\bm{u}^\top \bm{u}
        =-\|\bm{u}\|_2^2.
    \end{equation*}
    Substituting $\Delta\mathcal{L}_{\text{complexity}}$, $\Delta\mathcal{L}_{\text{data-fit}}$ and $\Delta\mathcal{L}_{\text{trace}}$ into~\eqref{eq:DeltaLDecomp}
    yields
    \begin{equation*}
        \Delta \mathcal L
        =
        -\frac{1}{2}\left(
        \log(1+a)
        -
        \frac{b^2}{1+a}
        -
        \frac{1}{\sigma_\varepsilon^2}\|\bm{u}\|_2^2
        \right),
    \end{equation*}
    which leads to the desired result.
\end{proof}
From the theorem it becomes evident that each term of the exact per-step change $\Delta \mathcal{L}$ corresponds to one of the three terms of $\mathcal{L}(\widetilde{K})$ in~\eqref{eq:vfe}.

\paragraph{Cholesky-Consistent Updates}
Theorem~\ref{thm:per-pivot-gain} holds for any rank-1 update vector $\bm{u} \in \mathbb{R}^n$. To obtain a practical algorithm, we restrict our attention to updates that preserve the incremental Cholesky factorization $\widetilde{K}_i = L_i L_i^\top$, namely those of the form $\bm{u}_p = \bm{t}_p/\sqrt{(T_i)_{p,p}}$, i.e.~the column that pivoted Cholesky would append to $L_i$ when selecting pivot $p$. For the remainder of this work the index $p$ identifies quantities connected to pivot $p$.
Under this restriction, the gain formula~\eqref{eq:deltaVFE} reduces to a closed-form criterion in terms of the residual column $\bm{t}_p$ and diagonal entry $d_p^{(i)}$ that already appear in classical pivoted Cholesky.
The following proposition formalizes the construction and gives the explicit forms of $a_p$ and $b_p$.

\begin{proposition}[Cholesky-Consistent Rank-1 Updates]
\label{prop:chol-consistent}
At iteration $i$, let $\widetilde{K}_i = L_i L_i^\top$, $T_i = K - \widetilde{K}_i \succeq 0$,
and let $p$ satisfy $(T_i)_{pp} > 0$. For
\begin{equation*}
    \label{eq:chol-direction}
    \bm{t}_p  = (T_i)_{:,p}, \quad
    d_p^{(i)} = (T_i)_{p,p}, 
    \quad
    \text{and}
    \quad
    \bm{u}_p = \bm{t}_p / \sqrt{{d}_p^{(i)}}.
\end{equation*}
it holds 
\begin{itemize}
    \item[(i)] $L_{i+1} := (L_i \;\; \bm{u}_p)$ satisfies $\widetilde{K}_{i+1} = L_{i+1}L_{i+1}^\top$,
    \item[(ii)] the residual update is given by $T_{i+1} = T_i - \bm{u}_p \bm{u}_p^\top \succeq 0$ with $(T_{i+1})_{pp} = 0$, and
    \item[(iii)] the per-pivot quantities from Theorem~\ref{thm:per-pivot-gain} take the form
    \begin{equation}
    \label{eq:ajbj}
        a_p := \frac{\bm{t}_p^\top \Sigma_i^{-1} \bm{t}_p}{{d}_p^{(i)}},
        \quad
        \text{and}
        \quad
        b_p := \frac{\bm{t}_p^\top \Sigma_i^{-1} \bm{y}}{\sqrt{{d}_p^{(i)}}}.
    \end{equation}
\end{itemize} 
\end{proposition}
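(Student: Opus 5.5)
The proof handles the three items in order; each is a short direct computation from the definitions.

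\emph{Item (i).} This is immediate from block multiplication:
\begin{equation*}
L_{i+1}L_{i+1}^\top = L_iL_i^\top + \bm{u}_p\bm{u}_p^\top = \widetilde K_i + \bm{u}_p\bm{u}_p^\top .
\end{equation*}
The right-hand side is precisely $\widetilde K_{i+1}$ as defined by the candidate update~\eqref{eq:candidate-update} with $\bm{u}=\bm{u}_p$.

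\emph{Item (ii).} The residual identity follows from $T_{i+1} = K - \widetilde K_{i+1} = T_i - \bm{u}_p\bm{u}_p^\top$. It coincides with the Schur complement form~\eqref{eq:schur}, since $(T_i)_{p,:} = \bm{t}_p^\top$ by symmetry of $T_i$.

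The zero diagonal entry is a one-line check:
\begin{equation*}
(T_{i+1})_{pp} = d_p^{(i)} - \frac{(\bm{t}_p)_p^2}{d_p^{(i)}} = d_p^{(i)} - \frac{(d_p^{(i)})^2}{d_p^{(i)}} = 0 .
\end{equation*}

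Positive semidefiniteness is the only step needing a genuine argument, and I would handle it in one of two ways.
\begin{itemize}
\item \emph{Direct argument.} Write $T_i = B^\top B$, which is possible since $T_i \succeq 0$, and set $\bm{w} := B\bm{e}_p / \|B\bm{e}_p\|_2$. This is well defined because $\|B\bm{e}_p\|_2^2 = d_p^{(i)} > 0$. Then $\bm{u}_p = B^\top \bm{w}$, so
\begin{equation*}
T_{i+1} = B^\top (I - \bm{w}\bm{w}^\top) B \succeq 0,
\end{equation*}
because $I - \bm{w}\bm{w}^\top$ is an orthogonal projector.
\item \emph{Schur complement argument.} After permuting $p$ to the leading position, $T_{i+1}$ is the Schur complement of the positive pivot $d_p^{(i)}$ in the psd matrix $T_i$, and Schur complements of psd matrices with respect to a positive definite block are psd.
\end{itemize}
I would present the projector argument, since it is self-contained.

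\emph{Item (iii).} Substitute $\bm{u} = \bm{u}_p = \bm{t}_p/\sqrt{d_p^{(i)}}$ into the definitions $a = \bm{u}^\top\Sigma_i^{-1}\bm{u}$ and $b = \bm{u}^\top\Sigma_i^{-1}\bm{y}$ from Theorem~\ref{thm:per-pivot-gain}. The scalar factor $1/\sqrt{d_p^{(i)}}$ appears twice in $a$, giving $1/d_p^{(i)}$, and once in $b$, which yields~\eqref{eq:ajbj}.

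It is also worth noting that $\Sigma_i$ is invertible, since $\Sigma_i \succeq \sigma_\varepsilon^2 I \succ 0$, so these quantities are well defined and $a_p \ge 0$.

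The main (mild) obstacle is the positive semidefiniteness claim in item (ii); everything else is direct substitution.
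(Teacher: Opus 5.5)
Your proposal is correct and follows the paper's proof item by item: (i) by block multiplication, (ii) by the one-line diagonal computation, and (iii) by substituting $\bm{u}_p=\bm{t}_p/\sqrt{d_p^{(i)}}$ into $a$ and $b$. The only difference is the psd claim in (ii): the paper just appeals to the standard Cholesky elimination (Schur complement) argument, which is your second bullet, whereas your factorization $T_{i+1}=B^\top(I-\bm{w}\bm{w}^\top)B$ is a self-contained proof (rename your factor, though, since $B$ already denotes $B_i=M_i^{-1}$ in the paper).
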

\begin{proof}
    The factorization identity (i) $\widetilde{K}_{i+1}=L_{i+1}L_{i+1}^\top$ follows from $L_i L_i^\top + \bm{u}_p \bm{u}_p^\top = \widetilde{K}_i +  \bm{u}_p \bm{u}_p^\top$. For (ii), the diagonal entry satisfies
    \begin{equation*}
        (T_{i+1})_{p,p}
        = (T_i)_{p,p} - \left(\bm{u}_p\right)_p^2
        = {d}_p^{(i)} - \frac{\left({d}_p^{(i)}\right)^2}{{d}_p^{(i)}}
        = 0,
    \end{equation*}
    where we used $(\bm{t}_p)_p = (T_i)_{p,p} = {d}_p^{(i)}$ and $(\bm{u}_p)_p = (\bm{t}_p)_p / \sqrt{{d}_p^{(i)}} = \sqrt{{d}_p^{(i)}}$. Further, we observe the residual update $T_{i+1} = T_i -\bm{u}_p \bm{u}_p^\top \succeq 0$ follows from the definitions of $T_i$ and $T_{i+1}$. Positive semi-definiteness (psd) follows from the standard Cholesky rank-1 argument, i.e.~since $T_i \succeq0$ and $(T_i)_{p,p} = 0$ the residual, after eliminating row and column $p$, is again psd. Regarding (iii), the forms of $a_p$ and $b_p$ follow by substituting $\bm{u}_p = \bm{t}_p/\sqrt{{d}_p^{(i)}}$ into the definitions $a_p = \bm{u}_p^\top \Sigma_i^{-1} \bm{u}_p$ and $b_p = \bm{u}_p^\top \Sigma_i^{-1} \bm{y}$ of Theorem~\ref{thm:per-pivot-gain}. This concludes the proof.
\end{proof}
Taken together, Theorem~\ref{thm:per-pivot-gain} and Proposition~\ref{prop:chol-consistent} yield a complete specification of the pivot-selection criterion. At iteration $i$, it evaluates $\Delta\mathcal{L}_p$ using \eqref{eq:ajbj} and selects for each admissible candidate $p$ the pivot that maximizes this gain. We refer to the resulting pivot rule, together with the Cholesky-consistent rank-1 update as \textit{$\Delta$-VFE pivoted Cholesky}.

A further property of Cholesky-consistent updates, beyond preserving the factorization, is that they guarantee monotonic improvement of $\mathcal{L}(\widetilde{K})$ along the update sequence, regardless of the specific pivot-selection rule. This is shown in the following proposition. 

\begin{proposition}[Monotonicity]
    \label{lem:monotone}
    Let $\sigma_\varepsilon^2>0$. At iteration $i$, suppose that
    $T_i=K-\widetilde K_i\succeq 0$ and let $p$ be any admissible pivot
    with $(T_i)_{pp}>0$. Then
    \begin{equation}
        \Delta\mathcal{L}_p
        \geq
        \frac{b_p^2}{2(1+a_p)}
        \geq 0.
        \label{eq:DeltaLowerBound}
    \end{equation}
    Consequently, any sequence of admissible Cholesky-consistent rank-one updates satisfies
    \begin{equation}
        \mathcal{L}(\widetilde{K}_{i})
        \leq
        \mathcal{L}(\widetilde{K}_{i+1})
        \leq
        \mathcal{L}(K)
        \label{eq:VFEmonoSeq}
    \end{equation}
    for all iterations for which the update is defined.
\end{proposition}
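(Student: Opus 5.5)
By Theorem~\ref{thm:per-pivot-gain}, with $a_p,b_p$ as in Proposition~\ref{prop:chol-consistent}, we can write
\begin{equation*}
\Delta\mathcal{L}_p = \frac{b_p^2}{2(1+a_p)} + \frac12\left(\frac{1}{\sigma_\varepsilon^2}\|\bm{u}_p\|_2^2 - \log(1+a_p)\right).
\end{equation*}
So the first inequality in~\eqref{eq:DeltaLowerBound} reduces to showing $\log(1+a_p)\le \|\bm{u}_p\|_2^2/\sigma_\varepsilon^2$. The second inequality is immediate, since $a_p=\bm{u}_p^\top\Sigma_i^{-1}\bm{u}_p\ge 0$ because $\Sigma_i\succ 0$. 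Here $\Sigma_i=L_iL_i^\top+\sigma_\varepsilon^2 I\succeq\sigma_\varepsilon^2 I$ uses $\sigma_\varepsilon^2>0$.

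For the key inequality I would combine two elementary bounds. First, $\log(1+a)\le a$ holds for $a\ge 0$. Second, because $\Sigma_i\succeq \sigma_\varepsilon^2 I$, we have $\Sigma_i^{-1}\preceq \sigma_\varepsilon^{-2} I$, since operator monotonicity of inversion on positive definite matrices applies. This gives $a_p\le \|\bm{u}_p\|_2^2/\sigma_\varepsilon^2$. Chaining the two yields $\log(1+a_p)\le a_p\le \|\bm{u}_p\|_2^2/\sigma_\varepsilon^2$, so the trace contribution dominates the complexity penalty. Only the data-fit term then remains as a lower bound. This is the main step, and it is also where positivity of $\sigma_\varepsilon^2$ is essential.

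For~\eqref{eq:VFEmonoSeq}, the left inequality follows by summing $\Delta\mathcal{L}_p\ge 0$ along the iterations. For the right inequality, I would show that $\mathcal{L}(\widetilde K)\le\mathcal{L}(K)$ for every admissible $\widetilde K$ in the sequence. The route I would take is to note that $\widetilde K_{i+1}=K-T_{i+1}\preceq K$ by Proposition~\ref{prop:chol-consistent}(ii). I would then apply Titsias' bound $\mathcal{L}(\widetilde K)\le\mathcal{F}(K)$, together with $\mathcal{L}(K)=\mathcal{F}(K)$, which holds because the trace term vanishes at $\widetilde K=K$.

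Alternatively, I would argue self-containedly by continuing the Cholesky-consistent updates until the residual vanishes. Since $(T_{i+1})_{pp}=0$ and $T_{i+1}\succeq 0$ force column $p$ to vanish, each step strictly reduces the number of indices with positive residual diagonal. After at most $n$ steps one reaches $T=0$, i.e.\ $\widetilde K=K$. Monotonicity along this extended sequence then gives $\mathcal{L}(\widetilde K_{i+1})\le\mathcal{L}(K)$. The only care needed is the termination argument, namely that zero diagonal entries of a psd matrix imply zero rows, so that previously eliminated pivots stay eliminated.
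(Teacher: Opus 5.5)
Your proposal is correct and takes essentially the same approach as the paper. The lower bound comes from the chain $\log(1+a_p)\le a_p\le\|\bm{u}_p\|_2^2/\sigma_\varepsilon^2$ via $\Sigma_i^{-1}\preceq\sigma_\varepsilon^{-2}I$, and the bound by $\mathcal{L}(K)$ comes from $\widetilde K_{i+1}\preceq K$ together with Titsias' bound; you are also slightly more explicit than the paper in noting $\mathcal{L}(K)=\mathcal{F}(K)$. Your alternative self-contained route, running Cholesky-consistent updates until $T=0$ and using monotonicity along the extended sequence, is also valid and has the advantage of not relying on the external citation.
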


\begin{proof}
    Let $i$ denote the current iteration. Since $\widetilde K_i\succeq 0$, we have $\Sigma_i=\widetilde K_i+\sigma_\varepsilon^2 I \succeq \sigma_\varepsilon^2 I.$
    Hence, $\Sigma_i^{-1}\preceq \sigma_\varepsilon^{-2}I.$
    Therefore,
    \begin{equation*}
        a_p
        =
        \bm u_p^\top\Sigma_i^{-1}\bm u_p
        \le
        \frac{\|\bm u_p\|_2^2}{\sigma_\varepsilon^2}.
    \end{equation*}
    Since $a_p\ge 0$, the inequality $\log(1+a_p)\le a_p$ gives
    \begin{equation*}
        \log(1+a_p)
        \le
        a_p
        \le
        \frac{\|\bm u_p\|_2^2}{\sigma_\varepsilon^2}.
    \end{equation*}
    Substituting this into the gain formula yields
    \begin{equation*}
        \Delta\mathcal{L}_p
        =
        \frac12
        \left(
        \frac{\|\bm u_p\|_2^2}{\sigma_\varepsilon^2}
        -
        \log(1+a_p)
        +
        \frac{b_p^2}{1+a_p}
        \right)
        \ge
        \frac{b_p^2}{2(1+a_p)}
        \ge
        0,
    \end{equation*}
    which proves \eqref{eq:DeltaLowerBound}.
    
    If $p_i$ is the pivot selected at iteration $i$, then
    \begin{equation*}
        \mathcal L(\widetilde K_{i+1})
        =
        \mathcal L(\widetilde K_i)
        +
        \Delta\mathcal L_{p_i}
        \ge
        \mathcal L(\widetilde K_i).
    \end{equation*}
    Moreover, the Cholesky-consistent update preserves
    $T_i=K-\widetilde K_i\succeq0$, hence $\widetilde K_i\preceq K$.
    By the VFE lower-bound property~\cite{Titsias.2009},
    \begin{equation*}
        \mathcal L(\widetilde K_i)\le \mathcal L(K).
    \end{equation*}
    This proves \eqref{eq:VFEmonoSeq} and concludes the proof.
\end{proof}
\begin{remark}
    The bound \eqref{eq:VFEmonoSeq} holds for any admissible pivot, i.e.~any index $p$ with $(T_i)_{p,p} > 0$. Consequently, greedy pivoted Cholesky and RPCholesky also produce monotonically non-decreasing sequences of $\mathcal{L}(\widetilde{K})$ under Cholesky-consistent rank-1 updates. By construction, the $\Delta$-VFE rule then selects, at each step, the admissible pivot that maximizes this guaranteed gain. \hfill$\triangle$
\end{remark}

%% file: Sections/05_Efficient_Realisation.tex
\section{Algorithmic Realization of $\Delta$-VFE Pivoted Cholesky}\label{sec:realization}

The structure of the pivot score \eqref{eq:deltaVFE} allows for an efficient realization based on Woodbury identities and incremental low-rank updates. We first discuss the resulting pivot selection strategy, then present the corresponding algorithm, and finally give the computational complexity relative to the classical pivoted Cholesky method.

\subsection{Candidate Sampling}

A straightforward approach to choose the pivot at iteration $i$ is
\begin{equation}
    p_\star \in \argmax_p \Delta\mathcal L_p.
\label{eq:pivot-rule}
\end{equation}

Evaluating the bound for each of the $n$ candidate pivots would imply additional computational complexity of $\mathcal{O}(n^2i)$ per iteration, since applying $\Sigma_i^{-1}$ costs $\mathcal{O}(ni)$. Thus, evaluating this for $n$ candidate pivots leads to $\mathcal{O}(n^2i)$ and overall to $\mathcal{O}(n^2r^2)$ complexity, which is clearly infeasible and not compatible with classical pivoted Cholesky's $\mathcal{O}(nr^2)$ complexity. We therefore come to a central aspect on how this can be efficiently evaluated. 

To this end, we introduce a \emph{batch of candidate points} $\mathcal{S}_i$, $|\mathcal{S}_i| =:s  \ll n$ at each iteration, for which we evaluate the per-step change. Further, we introduce the \emph{set of pivots} $\mathcal{P}_i$ and the \emph{set of candidates} $\mathcal{C}_i$, which includes all points that have not been used as pivots, i.e.~$\mathcal{C}_i \cap \mathcal{P}_i = \emptyset$. In order to build this set we rely on a cheap way to select potentially strong candidates $\bm{u}_p$, by focusing on the respective diagonal residual ${d}_p$. For each candidate $p \in \mathcal{C}_i$, we assign a selection probability proportional to its diagonal residual $d_p^{(i)}$,
\begin{equation}
    \Pr \{P = p\} = \frac{{d}_p^{(i)}}{\sum_{k\in\mathcal{C}_i} {d}_k^{(i)}}
\end{equation}
and draw $s$ indices from $\mathcal{C}_i$ according to this probability to form the batch $\mathcal{S} \subseteq \mathcal{C}$. Therefore, we use the same probability distribution as RPCholesky. Hence, for (and only for) $s=1$ our proposed method and RPCholesky naturally yield the same results.

\subsection{Woodbury-based Inverse Updates}
Efficient evaluation of the $\Delta$-VFE pivot score requires repeated application of the regularized inverse $\Sigma_i^{-1} = (\widetilde K_i + \sigma_\varepsilon^2 I)^{-1}.$ To avoid recomputing this inverse after every rank-1 update, we exploit the low-rank structure of $\widetilde K_i$ via the Sherman-Morrison-Woodbury formula. We first recall the corresponding matrix inversion identity.
\begin{lemma}[Matrix Inversion Lemma {\cite[Chapter 18, Section 2]{harville.1998}}]\label{lem:woodbury}
For regular matrices $A\in \R^{n\times n}$, $C \in \R^{k\times k}$, $U\in \R^{n \times k}$ and $V\in \R^{k\times n}$ the Woodbury matrix identity is
\begin{equation*}
    (A + UCV)^{-1} = A^{-1} - A^{-1}U (C^{-1} + VA^{-1}U)^{-1}VA^{-1}.
\end{equation*}   
\end{lemma}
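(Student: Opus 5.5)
The plan is to verify the identity directly: multiply the proposed inverse by $A+UCV$ and show the product is $I_n$. This works for regular $A$ and $C$ as stated, with the implicit standing assumption that the capacitance matrix $C^{-1}+VA^{-1}U\in\R^{k\times k}$ is invertible. That assumption is needed for the right-hand side to make sense, and I would state it explicitly. In our application it holds automatically: $A=\sigma_\varepsilon^2 I$, $U=L_i$, $C=I$ and $V=L_i^\top$, so $C^{-1}+VA^{-1}U = I+\sigma_\varepsilon^{-2}L_i^\top L_i$, which is symmetric positive definite.

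Set $M:=C^{-1}+VA^{-1}U$ and compute
\begin{equation*}
(A+UCV)\bigl(A^{-1}-A^{-1}UM^{-1}VA^{-1}\bigr)
= I + UCVA^{-1} - UM^{-1}VA^{-1} - UCVA^{-1}UM^{-1}VA^{-1}.
\end{equation*}
The key step is to group the last three terms as $U\bigl(C - (I_k + CVA^{-1}U)M^{-1}\bigr)VA^{-1}$. Then observe that $I_k+CVA^{-1}U = C(C^{-1}+VA^{-1}U)=CM$. The bracket therefore equals $C - CMM^{-1}=0$, and the product reduces to $I_n$.

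A square matrix with a one-sided inverse is invertible, with that inverse as its two-sided inverse. This shows both that $A+UCV$ is regular and that the formula holds. Multiplying on the other side is therefore unnecessary, though the computation is symmetric and could be done the same way.

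The only real obstacle is the invertibility of $M$, which the lemma as stated leaves implicit. The algebraic factorization $I_k+CVA^{-1}U=CM$ is the one nontrivial manipulation. As a consistency check, for $k=1$ and $C=1$ the argument reduces to Lemma~\ref{lem:ShermanMorrison}.
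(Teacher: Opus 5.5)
Your verification is correct and complete. The expansion of the product, the grouping of the last three terms as $U\bigl(C-(I_k+CVA^{-1}U)M^{-1}\bigr)VA^{-1}$, the factorization $I_k+CVA^{-1}U=CM$, and the appeal to one-sided inverses of square matrices are all sound.

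The paper gives no proof of this lemma; it imports it from Harville. So your argument does not compete with one in the text but replaces the citation with a self-contained proof.

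You are also right that the statement is missing a hypothesis: one needs $M=C^{-1}+VA^{-1}U$ invertible. Likewise, ``regular'' is meaningless for the rectangular $U$ and $V$. Adding this hypothesis costs nothing, because for regular $A$ and $C$ it is equivalent to regularity of $A+UCV$. Your computation gives one direction, and your own factorization gives the other. Suppose $A+UCV$ is regular and $Mx=0$. Then $z:=A^{-1}Ux$ satisfies $(A+UCV)z=U(I_k+CVA^{-1}U)x=UCMx=0$, so $z=0$. Hence $Ux=Az=0$ and $C^{-1}x=Mx-VA^{-1}Ux=0$, i.e.\ $x=0$.

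For comparison, the usual textbook derivation computes the leading $n\times n$ block of the inverse of $\begin{pmatrix} A & U\\ V & -C^{-1}\end{pmatrix}$ in two ways. One uses the Schur complement of $A$, which is $-M$; the other uses that of $-C^{-1}$, which is $A+UCV$. That route derives the formula rather than verifying a known candidate. The same block elimination also yields $|A+UCV|=|A|\,|C|\,|C^{-1}+VA^{-1}U|$, the general form of Lemma~\ref{lem:matrixDeterminant}, and hence the invertibility equivalence at once. It also matches the Schur-complement reasoning the paper uses for Lemma~\ref{lem:woodbury_update}.

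Your route is shorter and fully elementary, which is all the paper needs. In its application ($A=\sigma_\varepsilon^2 I$, $U=V^\top=L_i$, $C=I$) the extra hypothesis holds automatically, since $M=M_i\succ 0$, exactly as you observe.
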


Computing $\Sigma_i^{-1}$ from scratch at each iteration would require an $\mathcal{O}(n^3)$ factorization, rendering repeated evaluation of the quantities $a_p$ and $b_p$ in~\eqref{eq:deltaVFE} computationally prohibitive. However, since $\Sigma_i = L_iL_i^\top + \sigma_\varepsilon^2 I$ consists of a diagonal term plus a rank-$i$ correction, its inverse can instead be maintained implicitly using the Woodbury identity from Lemma~\ref{lem:woodbury}. Applying $\Sigma_i^{-1}$ to a vector then requires only $\mathcal{O}(ni)$ operations, with dominant cost arising from multiplications involving the low-rank factor $L_i$.
The following lemma shows that the corresponding inverse representation can furthermore be updated without recomputing matrix factorizations. We postpone the complexity analysis to Proposition~\ref{prop:complexity} in Subsection~\ref{subsec:complexity}.

\begin{lemma}
\label{lem:woodbury_update}
Let $\Sigma_i = L_i L_i^\top + \sigma_\varepsilon^2 I$ with $L_i \in \R^{n\times i}$. We define $M_i := (I + L_i^\top L_i/\sigma_\varepsilon^{2})\in \R^{i\times i}$ and its inverse $B_i := M_i^{-1}$.
After appending column $\bm{u}_p$ to form $L_{i+1} = (L_i \;\; \bm{u}_p)$, we define
\begin{equation}
    \bm{c} := \frac{L_i^\top \bm{u}_p}{\sigma^2_\varepsilon}\in\R^i, 
    \quad
    \xi := 1 + \|\bm{u}_p \|_2^2/\sigma_\varepsilon^2\in\R, \quad \text{and} \quad
    \gamma := (\xi - \bm{c}^\top B_i \bm{c})^{-1} \in \R.
\end{equation}
Then $B_{i+1} = (I + L_{i+1}^\top L_{i+1}/\sigma_\varepsilon^2)^{-1}$
is given by
\begin{equation}
    B_{i+1} =
    \begin{pmatrix}
        B_i + \gamma B_i \bm{c} \bm{c}^\top B_i & -\gamma B_i \bm{c} \\
        -\gamma \bm{c}^\top B_i            &  \gamma
    \end{pmatrix},
\end{equation}
where $\gamma > 0$.
\end{lemma}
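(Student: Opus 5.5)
The natural route is the block-inverse (Schur complement) formula applied to $M_{i+1}$. Since $L_{i+1} = (L_i \;\; \bm{u}_p)$, we have
\begin{equation*}
    L_{i+1}^\top L_{i+1} = \begin{pmatrix} L_i^\top L_i & L_i^\top \bm{u}_p \\ \bm{u}_p^\top L_i & \|\bm{u}_p\|_2^2 \end{pmatrix},
\end{equation*}
so that
\begin{equation*}
    M_{i+1} = I + \frac{L_{i+1}^\top L_{i+1}}{\sigma_\varepsilon^2} = \begin{pmatrix} M_i & \bm{c} \\ \bm{c}^\top & \xi \end{pmatrix},
\end{equation*}
with $\bm{c}$ and $\xi$ exactly as defined in the statement. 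This bordering step is purely a bookkeeping identity.

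Next I check invertibility of the blocks. The matrix $M_i = I + L_i^\top L_i/\sigma_\varepsilon^2$ is symmetric positive definite, since $L_i^\top L_i \succeq 0$ and $\sigma_\varepsilon^2>0$. Hence $B_i = M_i^{-1}$ exists and is spd. By the same argument $M_{i+1}$ is spd, and this carries the key step. The Schur complement of $M_i$ in $M_{i+1}$ is $s := \xi - \bm{c}^\top B_i \bm{c}$. A standard fact is that for an spd block matrix this Schur complement is positive: its determinant factorization gives $|M_{i+1}| = |M_i|\, s$ with both determinants positive. Therefore $s>0$, $\gamma = s^{-1}$ is well defined, and $\gamma>0$.

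The final step is to apply the standard $2\times 2$ block inversion formula. One can cite Lemma~\ref{lem:woodbury} for it, or simply verify by multiplication. The formula gives
\begin{equation*}
    M_{i+1}^{-1} = \begin{pmatrix} B_i + B_i \bm{c}\, s^{-1} \bm{c}^\top B_i & -B_i \bm{c}\, s^{-1} \\ -s^{-1}\bm{c}^\top B_i & s^{-1} \end{pmatrix},
\end{equation*}
which is the claimed form of $B_{i+1}$ with $\gamma = s^{-1}$. To keep the proof self-contained, I would verify directly that $M_{i+1}B_{i+1} = I$ block by block:
\begin{itemize}
    \item The top-left block is $M_iB_i + \gamma M_iB_i\bm{c}\bm{c}^\top B_i - \gamma\bm{c}\bm{c}^\top B_i = I$.
    \item The top-right block is $-\gamma \bm{c} + \gamma\bm{c} = 0$.
    \item The bottom-left block is $\bm{c}^\top B_i + \gamma\bm{c}^\top B_i\bm{c}\bm{c}^\top B_i - \gamma\xi\bm{c}^\top B_i = \bm{c}^\top B_i\bigl(1 - \gamma(\xi - \bm{c}^\top B_i\bm{c})\bigr) = 0$.
    \item The bottom-right block is $-\gamma\bm{c}^\top B_i\bm{c} + \gamma\xi = 1$.
\end{itemize}
Symmetry of $B_{i+1}$ then gives the left inverse as well.

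The only nontrivial point is the positivity $\gamma>0$, that is, $\xi > \bm{c}^\top B_i\bm{c}$. I would prove it via the determinant or spd Schur complement argument above. An alternative is to note that $s = \bm{v}^\top M_{i+1}\bm{v}$ for $\bm{v} = (-B_i\bm{c};\,1) \neq 0$, which is positive because $M_{i+1}$ is spd.
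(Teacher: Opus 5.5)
Your proposal is correct and follows essentially the same route as the paper: you write $M_{i+1}$ as the bordered matrix with blocks $M_i$, $\bm{c}$, $\xi$, apply the Schur-complement block inverse, and obtain $\gamma>0$ from positive definiteness of $M_{i+1}$. Your block-by-block check of $M_{i+1}B_{i+1}=I$ and the identity $\xi-\bm{c}^\top B_i\bm{c}=\bm{v}^\top M_{i+1}\bm{v}$ with $\bm{v}=(-B_i\bm{c};\,1)$ make explicit what the paper only asserts. Note that the block-inverse formula is not Lemma~\ref{lem:woodbury}, so your direct verification, rather than that citation, is the right justification.
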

The proof follows by a Schur complement argument and is given in Appendix~\ref{app:woodburyInverse}.

Taken together the candidate sampling as discussed in the previous subsection restricts the overall evaluation of our per-pivot gain to a set of candidates, whereas the  Woodbury inverse updates allow an efficient evaluation and update of the inverse. 

\subsection{The $\Delta$-VFE Pivoted Cholesky Algorithm}
We now state the resulting $\Delta$-VFE pivoted Cholesky algorithm. To this end, we separate the Woodbury application of $\Sigma_i^{-1}$ and the main algorithm and give these as Algorithms~\ref{alg:apply_sigma_inv} and \ref{alg:vfe_batch_pc} respectively.

\paragraph{Algorithm~\ref{alg:apply_sigma_inv} (Woodbury Apply)}
Algorithm~\ref{alg:apply_sigma_inv} applies the regularized inverse $\Sigma^{-1} = (LL^\top + \sigma_\varepsilon^2 I)^{-1}$ to a vector $\bm v \in \mathbb R^n$ without forming the inverse explicitly. Exploiting the low-rank structure of $\Sigma$, the Woodbury identity from Lemma~\ref{lem:woodbury} yields
$\Sigma^{-1}\bm v
=
{\bm v}/{\sigma_\varepsilon^{2}}
-
L B L^\top \bm v/\sigma_\varepsilon^4,$
where $B = \left(I + L^\top L/\sigma_\varepsilon^{2}\right)^{-1} \in \mathbb R^{r\times r}.$
The matrix $B$ is maintained incrementally using the update from Lemma~\ref{lem:woodbury_update}, so that each application of $\Sigma^{-1}$ requires only multiplications involving the low-rank factor $L$.
\begin{algorithm}[t]
\caption{\textsc{Woodbury Apply} $\Sigma^{-1}\bm{v}$}
\label{alg:apply_sigma_inv}
\begin{algorithmic}[1]
\REQUIRE Cholesky factor $L\in\mathbb{R}^{n\times i}$, matrix $B=(I+L^\top L/\sigma_\varepsilon^{2})^{-1}\in\mathbb{R}^{i\times i}$, $\bm{v}\in\mathbb{R}^n$, noise variance $\sigma_\varepsilon^2 >0$
\ENSURE $\bm{z}=\Sigma^{-1}\bm{v}$,
  where $\Sigma=LL^\top + \sigma_\varepsilon^2 I $
\RETURN $\bm{z} \leftarrow \bm{v}/\sigma_\varepsilon^{2}
  - LBL^\top\bm{v}/\sigma_\varepsilon^{4}$
\end{algorithmic}
\end{algorithm}

\paragraph{Algorithm~\ref{alg:vfe_batch_pc} ($\Delta$-VFE Pivoted Cholesky)}

\begin{algorithm}[t]
\caption{\textsc{$\Delta$-VFE Pivoted Cholesky}}
\label{alg:vfe_batch_pc}
\begin{algorithmic}[1]
\REQUIRE spd matrix $K\in\mathbb{R}^{n\times n}$,
output data $\bm{y}\in\mathbb{R}^n$, noise variance $\sigma_\varepsilon^2$,
batch size $s$, target rank $r$
\ENSURE Cholesky factor $L\in\mathbb{R}^{n\times r}$, pivots $\mathcal{P}=\{p_1,\dots,p_r\}$
\STATE $L \leftarrow [\,]$, \quad $\mathcal{P} \leftarrow \emptyset$
\STATE $B \leftarrow [\,]$
\COMMENT{Represents $(I+L^\top L/\sigma_\varepsilon^{2})^{-1}$ or a factorization}
\STATE $\bm{d} \leftarrow \operatorname{diag}(K)$
\COMMENT{Residual diagonal for sampling}
\FOR{$i = 1, \dots, r$}
    \STATE $\mathcal{C} \leftarrow \{1,\dots,n\} \setminus \mathcal{P}$
    \STATE $w_i \leftarrow {d}_i / \sum_{k \in \mathcal{C}} {d}_k$ for $i \in \mathcal{C}$
    \COMMENT{Sampling weights}
    \STATE Sample $\mathcal{S} \subset \mathcal{C}$ with
    $|\mathcal{S}| = \min(s, |\mathcal{C}|)$ using probabilities $\{w_i\}$
    \STATE $\bm{v} \leftarrow \textsc{WoodburyApply}(L, B, \bm{y})$
    \COMMENT{Algorithm~\ref{alg:apply_sigma_inv}}
    \FOR{$p \in \mathcal{S}$}
        \STATE $\bm{t}_p \leftarrow K_{:,p} - L L_{p,:}^\top$
        \STATE $\bm{u}_p \leftarrow \bm{t}_p / \sqrt{d_p}$
        \STATE $\bm{z}_p \leftarrow \textsc{WoodburyApply}(L, B, \bm{u}_p)$
        \STATE $a_p \leftarrow \bm{u}_p^\top \bm{z}_p$
        \STATE $b_p \leftarrow \bm{u}_p^\top \bm{v}$
        \STATE $\Delta\mathcal{L}_p \leftarrow
        -\tfrac{1}{2}\!\left(
        \log(1 + a_p) - {b_p^2}/(1 + a_p) - {\|\bm{u}_p\|^2}/{\sigma_\varepsilon^{2}}
        \right)$
    \ENDFOR
    \STATE $p_\star \leftarrow \argmax_{p \in \mathcal{S}}\, \Delta\mathcal{L}_p$
    \STATE $\bm{t} \leftarrow K_{:,p_\star} - L L_{p_\star,:}^\top$
    \STATE $\bm{u} \leftarrow \bm{t} / \sqrt{d_{p_\star}}$
    \STATE $\xi \leftarrow 1 + \|\bm{u}\|_2^2/\sigma_\varepsilon^2$
    \STATE $\gamma \leftarrow \left(\xi - \bm{c}^\top B\bm{c}\right)^{-1}$
    \STATE $B \leftarrow
    \begin{pmatrix}
    B + \gamma B\bm{c}\bm{c}^\top B & -\gamma B\bm{c} \\
    -\gamma \bm{c}^\top B & \gamma
    \end{pmatrix}$
    \COMMENT{Rank-1 block update}
    \STATE $L \leftarrow \bigl(L \;\; \bm{u}\bigr)$
    \STATE $\bm{d} \leftarrow \bm{d} - \bm{u}^2$
    \STATE $\mathcal{P} \leftarrow \mathcal{P} \cup \{p_\star\}$
\ENDFOR
\RETURN $L, \, \mathcal{P}$
\end{algorithmic}
\end{algorithm}

Algorithm~\ref{alg:vfe_batch_pc} implements the $\Delta$-VFE pivot rule within the Cholesky framework. The algorithm maintains three quantities, i.e. the Cholesky factor $L \in \mathbb{R}^{n \times r}$, the auxiliary matrix $B = (I + L^\top L/\sigma_\varepsilon^{2})^{-1} \in\mathbb{R}^{r \times r}$ needed for Woodbury applies, and the residual diagonal $\bm{d} = \mathrm{diag}(T_i)$ used for candidate sampling (lines~1 to~3).

At each iteration $i$, a batch $\mathcal{S}$ of $s$ candidate pivots is drawn without replacement from the candidate indices $\mathcal{C} = \{1,\ldots,n\} \setminus \mathcal{P}$, proportionally to the residual diagonal $d$ (lines~5 to~7). This is the same diagonal-proportional sampling used by RPCholesky. The vector $\bm{v} = \Sigma_i^{-1}\bm{y}$ is then computed once per iteration via Algorithm~\ref{alg:apply_sigma_inv} (line~8) and reused for all candidates in the batch.

For each candidate $p \in \mathcal{S}$, the residual column $\bm{t}_p = (K - LL^\top)\bm{e}_p$ is formed (line~10), the normalised direction $\bm{u}_p = \bm{t}_p/\sqrt{d_p}$ is computed (line~11) and a second Woodbury apply gives $\bm{z}_p = \Sigma_i^{-1} \bm{u}_p$ (line~12). The scalar quantities $a_p = \bm{u}_p^\top\bm{z}_p$ and $b_p = \bm{u}_p^\top\bm{v}$ (lines~13 and~14) are then combined to evaluate the per-pivot gain $\Delta\mathcal{L}_p$ (line~15) from Theorem~\ref{thm:per-pivot-gain}.

The pivot with the highest gain $p_\star = \argmax_{p \in \mathcal{S}}\,\Delta\mathcal{L}_p$ is selected (line~17). Its residual column and normalized direction are recomputed (lines~18 and~19). 
In order, to update $B$ via the rank-1 block formula of Lemma~\ref{lem:woodbury_update} (line~22), we update the parameters $\xi$ and $\gamma$ (lines~20 and~21).
The column $\bm{u}$ is then appended to $L$ (line~23, before the residual diagonal is updated as $\bm{d} \leftarrow \bm{d} - \bm{u}^2$ (line~24), where $\bm{u}^2$ denotes the element wise square. Finally, $p_\star$ is added to the pivot set (line~25).
The algorithm then returns the low-rank Cholesky factor $L$ as well as the list of pivots $\mathcal{P}$ (line~27).

\begin{remark}
Although Algorithm~\ref{alg:vfe_batch_pc} is written in terms of candidate-wise quantities $\{\bm t_p,\bm u_p,\Delta\mathcal L_p\}_{p\in\mathcal S}$ for clarity of exposition, the computationally dominant operations are implemented in batched form. In particular, the residual directions $\bm t_p=(K-LL^\top)\bm e_p, \text{~with~} p\in\mathcal S,$ are assembled jointly into a matrix representation, and the associated contractions with $L^\top$, $B$, and $\bm y$ are evaluated via matrix--matrix operations. This replaces repeated matrix--vector computations by batched linear algebra kernels, improving practical efficiency and hardware utilization while preserving the same asymptotic complexity. The complexity analysis below reflects this batched implementation.
\hfill $\triangle$
\end{remark}

\subsection{Complexity Analysis}\label{subsec:complexity}
Classical pivoted Cholesky and its randomized variant are of $\mathcal{O}(nr^2)$ complexity in time \cite{chen.2025, epperly.2025, harbrecht.2012}. We turn towards the analysis of our proposed pivot choice and state the per iteration and overall complexity in the following proposition.
\begin{proposition}[Complexity of $\Delta$-VFE pivoted Cholesky]
    \label{prop:complexity}
    Assume that at iteration $i$ the current Cholesky factor is $L_i \in \R^{n\times i}$, the batchsize is $s$ and columns of $K$ are formed on demand. Then,
    \begin{enumerate}
        \item[(i)] Applying $\Sigma_i^{-1}$ to a vector using Algorithm~\ref{alg:apply_sigma_inv} costs $\mathcal{O}(ni + i^2)$.
        \item[(ii)] One iteration of Algorithm~\ref{alg:vfe_batch_pc} costs $T_i = \mathcal{O}\left(s(ni+i^2+n) + ni + i^2 + n\right).$
        In particular, for $n \gg i$ and $n \gg s$ this reduces to $T_i = \mathcal{O}(sni)$.
        \item[(iii)] After $r$ iterations, the total runtime is $\mathcal{O}(snr^2 + sr^3 + snr)$
        and for $n \gg r$ this turns to $\mathcal{O}(snr^2)$.
        \item[(iv)] The space complexity is $\mathcal{O}(nr + r^2),$ excluding storage of a precomputed kernel matrix $K$.
    \end{enumerate}
\end{proposition}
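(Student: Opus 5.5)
The plan is to prove the four items in order by counting the operations of Algorithms~\ref{alg:apply_sigma_inv} and~\ref{alg:vfe_batch_pc} line by line, with $L_i\in\R^{n\times i}$ and $B_i\in\R^{i\times i}$ at iteration $i$.

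For (i), evaluate $\bm z=\bm v/\sigma_\varepsilon^2 - L B L^\top\bm v/\sigma_\varepsilon^4$ right to left. The product $L^\top\bm v$ costs $\mathcal{O}(ni)$. Applying $B$ costs $\mathcal{O}(i^2)$. Multiplying by $L$ costs $\mathcal{O}(ni)$. The scalings and the vector subtraction cost $\mathcal{O}(n)$. The total is $\mathcal{O}(ni+i^2)$, which absorbs the $\mathcal{O}(n)$ term since $i\ge1$ (at $i=0$ only $\mathcal{O}(n)$ remains).

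For (ii), account for the per-candidate work and the per-iteration work separately.

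\emph{Per candidate.} For each $p\in\mathcal{S}$:
\begin{itemize}
\item Forming $K_{:,p}$ on demand costs $\mathcal{O}(n)$ kernel evaluations.
\item The product $L L_{p,:}^\top$ costs $\mathcal{O}(ni)$.
\item Normalizing to $\bm u_p$ costs $\mathcal{O}(n)$.
\item One Woodbury apply costs $\mathcal{O}(ni+i^2)$ by (i).
\item The inner products $a_p$, $b_p$, $\|\bm u_p\|_2^2$ and the scalar gain cost $\mathcal{O}(n)$.
\end{itemize}
This gives $\mathcal{O}(ni+i^2+n)$ per candidate, or $s(ni+i^2+n)$ over the batch.

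\emph{Per iteration.}
\begin{itemize}
\item Sampling weights and drawing $s$ indices cost $\mathcal{O}(n+s)$, or $\mathcal{O}(sn)$ with naive repeated sampling. Either way this is dominated, and I would note that the choice of sampling scheme is the only mildly delicate point.
\item $\bm v=\Sigma_i^{-1}\bm y$ is computed once at cost $\mathcal{O}(ni+i^2)$.
\item Recomputing $\bm t,\bm u$ for $p_\star$ costs $\mathcal{O}(ni)$.
\item Computing $\bm c=L_i^\top\bm u/\sigma_\varepsilon^2$ costs $\mathcal{O}(ni)$, $\xi$ costs $\mathcal{O}(n)$, and $B\bm c$, $\gamma$ cost $\mathcal{O}(i^2)$.
\item The block update of Lemma~\ref{lem:woodbury_update} costs $\mathcal{O}(i^2)$, since $\gamma (B\bm c)(B\bm c)^\top$ is an outer product.
\item Appending the column and updating $\bm d$ cost $\mathcal{O}(n)$.
\end{itemize}
Summing gives the stated $T_i$. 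Under $n\gg i$ and $n\gg s$ we have $i^2\le ni$ and $n\le ni$, so $T_i=\mathcal{O}(sni)$.

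For (iii), sum over $i=1,\dots,r$ using $\sum i=\mathcal{O}(r^2)$ and $\sum i^2=\mathcal{O}(r^3)$. This gives $s(nr^2+r^3+nr)+nr^2+r^3+nr=\mathcal{O}(snr^2+sr^3+snr)$. For $n\gg r$, $sr^3\le snr^2$ and $snr\le snr^2$, so the total is $\mathcal{O}(snr^2)$.

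For (iv), count storage:
\begin{itemize}
\item $L$ takes $nr$ entries and $B$ takes $r^2$.
\item $\bm d$, $\bm v$, $\bm y$ take $\mathcal{O}(n)$, and $\mathcal{P}$ takes $\mathcal{O}(r)$.
\item Candidate vectors $\bm t_p,\bm u_p,\bm z_p$ take $\mathcal{O}(n)$ each if processed sequentially. In the batched form they take $\mathcal{O}(sn)$, which I would state is within $\mathcal{O}(nr)$ when $s\le r$, or else note the extra $\mathcal{O}(sn)$ workspace.
\end{itemize}
The expected main obstacle is this bookkeeping of batch workspace in (iv) together with the sampling cost in (ii). I would handle both by stating the sequential candidate loop as the reference implementation.
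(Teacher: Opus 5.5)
Your proposal is correct and follows the paper's own argument in Appendix~\ref{app:complexity}: a line-by-line operation count of Algorithms~\ref{alg:apply_sigma_inv} and~\ref{alg:vfe_batch_pc}, then summation over iterations using $\sum i=\mathcal{O}(r^2)$ and $\sum i^2=\mathcal{O}(r^3)$, and finally a storage count for $L$, $B$ and $\bm d$. Your extra bookkeeping is correct and leaves the stated bounds unchanged: the $\mathcal{O}(ni)$ cost of forming $\bm c=L_i^\top\bm u/\sigma_\varepsilon^2$ is left implicit in the paper's algorithm and count, and the $\mathcal{O}(sn)$ workspace of the batched candidate evaluation is not mentioned in the paper's space count.
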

We give the full derivation of the complexity analysis in Appendix~\ref{app:complexity}. In case that the kernel matrix $K$ needs to be precomputed, the time and space complexity rises to $\mathcal{O}(n^2d)$ and $\mathcal{O}(n^2)$, respectively. Given the above proposition, the total, asymptotic time and space complexity of our proposed method matches the one of classic pivoted Cholesky and RPCholesky up to the batching factor $s$. This is under the assumption that $n \gg r$, which is a common one for the efficient applicability of low-rank approximations.

%% file: Sections/06_Interpretation.tex
\section{Interpretation \& Relations to Existing Methods}\label{sec:interpretation}

In this section we interpret the structure of the $\Delta$-VFE pivot criterion and clarify its relation to classical pivoted Cholesky, ridge leverage scores, and other data-aware selection rules.

\subsection{Structure of the $\Delta$-VFE Gain}

For a candidate pivot $p$, the exact per-step change of $\mathcal{L}(\widetilde{K})$ is given by
\begin{equation*}
    \Delta\mathcal{L}_p = -\frac{1}{2}\left(\log(1+a_p)-\frac{b_p^2}{1+a_p} - \frac{1}{\sigma_\varepsilon^2} \|\bm{u}_p\|_2^2 \right),
\end{equation*}
where $a_p = \bm{u}_p^\top \Sigma_i^{-1} \bm{u}_p,$ and $b_p = \bm{u}_p^\top \Sigma_i^{-1} \bm{y}$, taking complexity, data-fit and trace penalty into account.

\paragraph{Inverse-Energy Interpretation of $a_p$}
Since $\Sigma_i^{-1} \succ 0$, it induces an inner product. Therefore, we can rewrite $a_p = \bm{u}_p^\top \Sigma_i^{-1} \bm{u}_p =  \|\bm{u}_p \|_{\Sigma_i^{-1}}^2$, i.e.~as the squared norm of the update in the $\Sigma_i^{-1}$-induced inner product. It quantifies how strongly the candidate direction interacts with the metric induced by the regularized inverse. The logarithmic term penalizes updates that reinforce directions already strongly weighted by the current inverse, regularized kernel matrix $\Sigma_i^{-1}$, thereby limiting redundant contributions to model complexity. 
Geometrically, interpreting $\Sigma_i^{-1}$ as a regularized inverse operator, we can also relate it to a Mahalanobis-type norm induced by $\Sigma_i^{-1}$~\cite{bishop.2006, christmann.2008}.

\paragraph{Residual Interpretation of $b_p$}
To interpret the data-fit term, we refer to Section~\ref{sec:basics}, where we have defined the approximate predictive mean of a GPR as
\begin{equation}
    \bm{\tilde\mu}_i = \widetilde K_i \Sigma_i^{-1} \bm{y}, \label{eq:mutilde}
\end{equation}
representing the prediction of $\bm{y}$ under the rank-$i$ approximation.
Since $\Sigma_i = \widetilde K_i + \sigma_\varepsilon^2 I,$ we have $(\widetilde K_i + \sigma_\varepsilon^2 I)\Sigma_i^{-1} = I$. 
Applying this identity to $\bm{y}$ yields
\begin{equation*}
    \widetilde K_i \Sigma_i^{-1} \bm{y}
    + \sigma_\varepsilon^2 \Sigma_i^{-1} \bm{y}
    =
    \bm{y}. \label{eq:RLSdeltaVFE}
\end{equation*}
Inserting \eqref{eq:mutilde}, this becomes $\bm{\tilde\mu}_i+\sigma_\varepsilon^2 \Sigma_i^{-1} \bm{y}=\bm{y},$ and therefore
\begin{equation*}
    \Sigma_i^{-1} \bm{y}
    =
    \frac{1}{\sigma_\varepsilon^{2}}
    (\bm{y} - \bm{\tilde\mu}_i).
\end{equation*}
Consequently, the data-fit term can be written as
\begin{equation}
    b_p
    =
    \bm{u}_p^\top \Sigma_i^{-1} \bm{y}
    =
    \frac{1}{\sigma_\varepsilon^{2}}
    \bm{u}_p^\top (\bm{y} - \bm{\tilde\mu}_i). \label{eq:bReformulated}
\end{equation}
Thus, the alignment term is a scaled inner-product between the candidate pivot direction and the current prediction residual, measuring their unnormalized alignment.

\paragraph{Trace Interpretation of $\bm{u}_p$}
The third term $\|\bm{u}_p\|_2^2/\sigma_\varepsilon^2$ admits a direct algebraic interpretation. By Lemma~\ref{lem:trace}, the one-step decrease in $\tr(T_i)$ induced by selecting pivot $p$
is
\begin{equation*}
    \tr(T_i) - \tr(T_{i+1})
    = \|\bm{u}_p\|_2^2
    = \frac{(T_i^2)_{pp}}{(T_i)_{pp}},
\end{equation*}
so $\|\bm{u}_p\|_2^2/\sigma_\varepsilon^2$ corresponds to the one-step decrease in $\tr(T_i)/\sigma_\varepsilon^2$, the trace penalty component of $\mathcal{L}(\widetilde{K})$ as we have shown in Lemma~\ref{lem:trace}. 
The $\Delta$-VFE criterion therefore incorporates the \emph{exact} one-step trace decrease rather than the diagonal proxy as classical pivoted Cholesky does.

\subsection{Connection to Existing Work}
\label{subsec:connections}

The $\Delta$-VFE gain formula $\Delta\mathcal{L}_p$  decomposes into three terms, each with a precise  algebraic interpretation that connects to a distinct strand of existing work. We discuss these connections in turn.

\paragraph{Classical Pivoted Cholesky and Three-Level Hierarchy}
The term $\|\bm{u}_p\|_2^2/\sigma_\varepsilon^2$ in $\Delta\mathcal{L}_p$ equals the exact one-step  decrease in $\tr(T_i)/\sigma_\varepsilon^2$ by Lemma~\ref{lem:trace}. This term places three pivot rules on a hierarchy of successive refinements. Classical greedy pivoted Cholesky and RPCholesky operate at the first level, using the diagonal entry $(T_i)_{p,p}$ as a tractable proxy for the trace decrease, as established in Section~\ref{sec:pcholesky}. The  exact trace-reduction maximizer of Lemma~\ref{lem:trace}  sits at the second level, using the full column $\bm{t}_p$ but still ignoring the log-determinant and data-fit contributions. At the third level, $\Delta$-VFE combines the exact trace term with $\log(1+a_p)$ and $b_p^2/(1+a_p)$. These contributions go beyond residual geometry, as the first depends on the regularized inverse $\Sigma_i^{-1}$ and the second additionally on the data vector $\bm{y}$. Each level incorporates additional information beyond the preceding one, ranging from diagonal residual information to full residual geometry and finally to the complete variational objective.

\paragraph{Relation to Sparse GP Optimization}
Cao et al.~\cite{cao.2015} obtain the same scalar per-candidate variational gain in the setting of sparse Gaussian process inducing-point optimization, through an augmented QR representation of the partial Cholesky factor. The Woodbury-based derivation given in Theorem~\ref{thm:per-pivot-gain} expresses the gain directly in terms of the regularized inverse $\Sigma_i$ and the Schur-complement geometry of the residual. This formulation makes the additive decomposition of the gain into log-determinant, data-fit, and trace contributions explicit, and positions $\Delta\mathcal{L}_p$ as a pivot-selection criterion on the same footing as the residual-based criteria above. The structural relation to classical greedy pivoted Cholesky and RPCholesky described in the previous paragraph is a consequence of this form and is not visible in the augmented QR representation of Cao et al.

\paragraph{Relation to Data-Aware Pivot Rules}
Both Schreiter et al.~\cite{schreiter.2016} and our data-fit term $b_p = \bm{u}_p^\top \Sigma_i^{-1} \bm{y}$ in $\Delta\mathcal{L}_p$ exploit the predictive residual $\bm{y} - \bm{\tilde{\mu}}_i$ and the current sparse model to guide pivot selection. However, they enter the selection rule with different roles in the selection rule. The criterion proposed by Schreiter et al.~is pointwise, i.e. each candidate $p$ is scored by the magnitude of the predictive residual at a single index $|y_p - \tilde{\mu}_{i,p}|$. In contrast, $b_p$ is a directional quantity. Following~\eqref{eq:bReformulated}, it is, up to the constant $1/\sigma_\varepsilon^2$, the inner product of the candidate update direction $\bm{u}_p$ with the current residual vector $\bm{y} - \bm{\tilde{\mu}}_i$. As a result, candidates are scored not by where the residual is largest at a single index, but by how strongly the corresponding rank-1 update aligns with the unexplained part of the data. Moreover, $b_p$ enters $\Delta\mathcal{L}_p$ alongside the complexity and trace terms, whereas Schreiter's criterion is used as a stand-alone selection rule. 

\paragraph{Relation to Ridge Leverage Scores}
Ridge leverage scores~\cite{alaoui.2015, musco.2017} sample columns proportionally to the diagonal of $K(K + \sigma_\varepsilon^2 I)^{-1}$, measuring global point importance under the full kernel matrix. The data-fit term $b_p$, despite its appearance in the reformulation~\eqref{eq:bReformulated}, is structurally different. It is a directional quantity constructed from the current low-rank approximation $\widetilde{K}_i$ rather than the full kernel $K$, depends on the data vector $\bm{y}$, and is updated incrementally as pivots are added. The two criteria emphasize different aspects of approximation quality and are not directly aligned.

%% file: Sections/07_experiments.tex
\section{Numerical Experiments}\label{sec:experiments}
\sloppy
In this section, we compare classical pivoted Cholesky, RPCholesky and $\Delta$-VFE pivoted Cholesky on several datasets.
We first describe the methodology and implementation details. 
The code to reproduce the results shown in this section can be found in our GitHub repository \url{https://github.com/SM4DA/DeltaVFE_Pivoted_Cholesky}.

The experiments in this section are designed to illustrate the structural effects predicted by the gain formula of Theorem~\ref{thm:per-pivot-gain} on standard benchmark problems. This is done on the following four targets: (i) relative VFE error, (ii) relative trace-norm error, (iii) predictive performance in GP regression and (iv) influence of the batch size $s$.

\subsection{Experimental Setup}
Each dataset is given as $\mathcal{D} = \{(\bm{x}_i,y_i)\}_{i=1}^n$, where $n$ denotes the total number of data points. For each dataset we evaluate all methods for different target ranks $r$ up to an maximal target rank $r_{\text{max}}=2\,048$ and average the results over 15 runs for every examined method involving randomization, i.e.~$\Delta$-VFE pivoted Cholesky and RPCholesky.

For each dataset, we use 80\% of the total $n$ data points for relative VFE and approximation errors.  The remaining 20\% will be used to evaluate predictive quantities of GPR, as introduced in Section~\ref{sec:basics}. 
The  hyperparameters, i.e.~the kernel parameter, are optimized for each dataset independently following \cite[Section 5.2]{CarlEdwardRasmussenandChristopherK.I.Williams.}, i.e. by maximizing the LML.

\subsection{Datasets}
We give a short introduction to the datasets used for the comparison of the methods. 
\paragraph{Abalones}
The Abalone dataset \cite{abalone} is a well-known dataset used for benchmarking in machine learning context, such as the work of Gittens et al.~\cite{gittens.2013}, where it was used for the analysis of different sampling strategies for Nystr\"om. This data was originally collected in order to predict the age of abalones from physical measurements and contains 4\,177 data points for eight features. We use the Gaussian kernel as introduced in Definition~\ref{def:kernels}. The corresponding standard deviation and the mean of the randomized methods is depicted in each of the respective figures. We set the batch size $s=8$ for $\Delta$-VFE pivoted Cholesky. 

\paragraph{QM7}
The QM7 dataset \cite{blum.qm7, rupp.qm7} is a benchmark regression dataset from quantum chemistry consisting of 7\,165 data points for 273 features. Each data point corresponds to a small molecule and is represented by a Coulomb matrix descriptor \cite{rupp.qm7}. The target variable is a scalar-valued energy. 
For this application we use the Laplacian kernel as specified in Definition~\ref{def:kernels}. The dataset provides a predefined partition into five folds for 5-fold cross-validation. For the analysis of the relative VFE error and the relative trace-norm error, we set $s=16$ for $\Delta$-VFE pivoted Cholesky.
We further use this dataset to analyze the influence of the choice of the batch size $s$ in Subsection~\ref{subsec:batch size}.

\subsection{VFE \& Approximation Error Comparison}\label{sec:perf}

We compare greedy pivoted Cholesky, RPCholesky, and $\Delta$-VFE pivoted Cholesky with respect to relative VFE error and relative trace-norm error as functions of the target rank $r$ in Figure~\ref{fig:vfe-approx}. The panels in the top row are associated to the Abalone dataset, and the bottom row to QM7. 

\begin{figure}
    \centering
    \includegraphics[width=\linewidth]{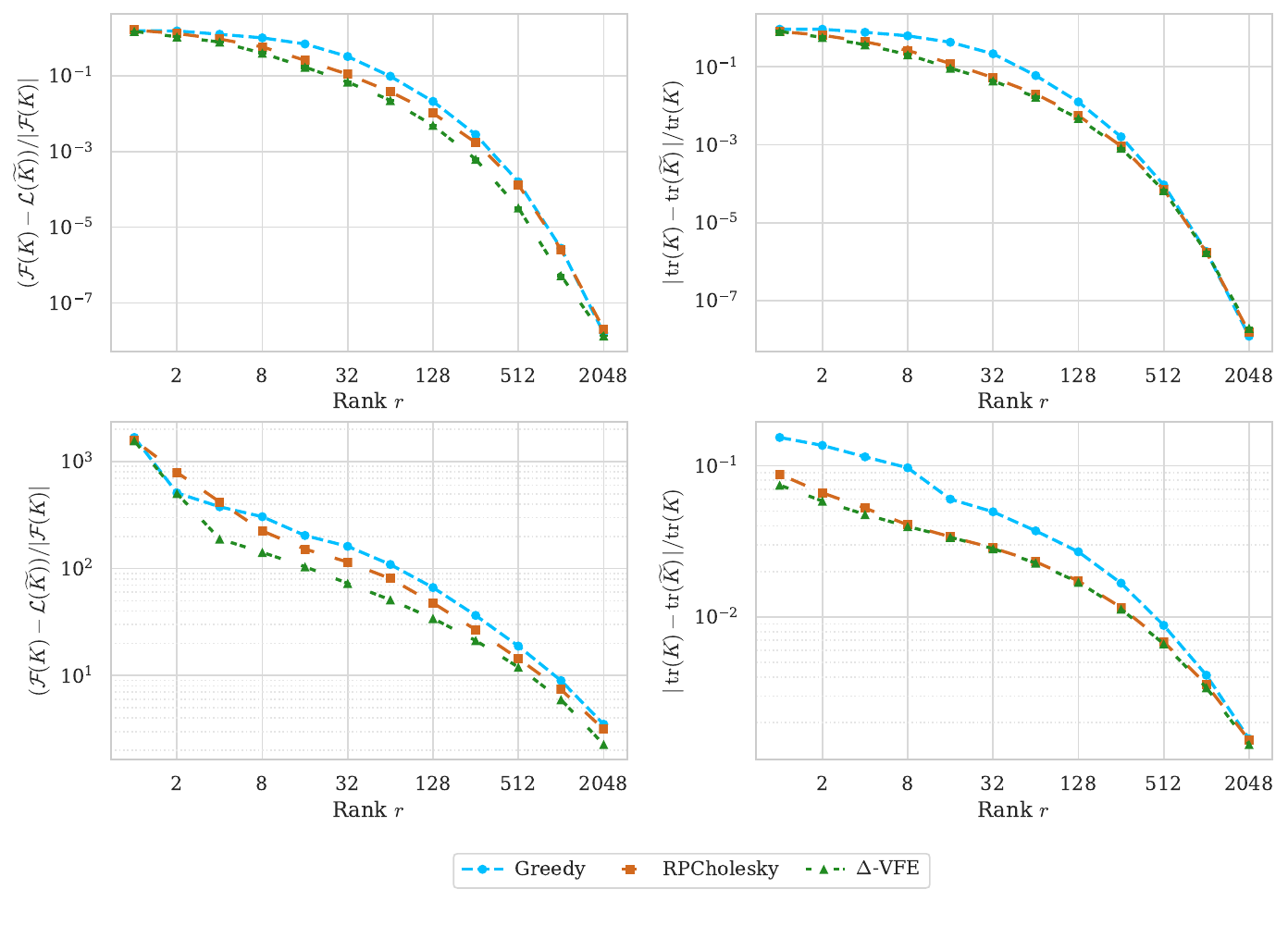}
    \caption{Comparison of greedy pivoted Cholesky, RPCholesky and $\Delta$-VFE pivoted Cholesky on the Abalone (top row) and QM7 (bottom row). We set the the batch parameter of $\Delta$-VFE to $s=8$ and $s=16$ for Abalone and QM7 respectively. The comparison of the respective relative VFE errors are shown in the left column. We display a comparison of the relative trace-norm error in the right column.}
    \label{fig:vfe-approx}
\end{figure}

\paragraph{VFE Objective}
On both datasets, $\Delta$-VFE attains the smallest relative VFE gap $(\mathcal{F}(K) - \mathcal{L}(\widetilde{K}))/|\mathcal{F}(K)|$ among the compared methods across all examined ranks, where a smaller value corresponds to a tighter approximation of the LML $\mathcal{F}(K)$.

For the Abalone dataset (Figure~\ref{fig:vfe-approx}, top left panel), a clear hierarchy of the compared methods emerges, where $\Delta$-VFE consistently outperforms both RPCholesky and greedy pivoted Cholesky. This advantage is especially noticeable for target ranks up to $r=32$. For the target rank $r=8$, corresponding to approximately $0.19\%$ of the number of datapoints $n$, $\Delta$-VFE attains a relative gap of $42.63\%$, reducing the gap of greedy pivoted Cholesky by approximately $57.79\%$ and that of RPCholesky by approximately $26.91\%$. 
The hierarchy persists across all ranks up to convergence, with $\Delta$-VFE's gap reduction relative to greedy pivoted Cholesky remaining above $70\%$ throughout $r \in [16, 1024]$. Only for $r=2048$ do all three methods drop below a relative gap of $0.02\%$, at which point the curves also visually coincide.

On QM7 (Figure~\ref{fig:vfe-approx}, bottom left panel), the same overall hierarchy can be observed, although the separation between the methods is less pronounced at the smallest ranks. In particular, for $r \in \{1,2\}$ all three methods attain very large relative gaps and remain comparatively close on the scale of the plot, with the hierarchy only becoming clearly distinguishable from $r=4$ onward. At $r=4$, $\Delta$-VFE reduces the gap of greedy pivoted Cholesky by approximately $50.43\%$ and that of RPCholesky by approximately $49.01\%$. The advantage over greedy pivoted Cholesky peaks at approximately $57.04\%$ for $r=64$. Even at the largest evaluated rank, $r=2048$, $\Delta$-VFE still achieves the smallest relative gap, namely $230.66\%$, compared to $360.89\%$ for greedy pivoted Cholesky and $317.43\%$ for RPCholesky, indicating that none of the methods has fully converged within the evaluated rank range. As the rank increases further, the gap between $\Delta$-VFE and RPCholesky gradually decreases, reflecting that randomized pivot selection becomes increasingly competitive once sufficiently many pivots are available.

Overall, these results validate that the specifically designed $\Delta$-VFE pivot rule yields superior performance on the VFE objective and demonstrate that both the complexity and data-fit terms play an important role in pivot selection.

\paragraph{Kernel Matrix Approximation Error}
The relative trace-norm error $|\tr(K)-\tr(\widetilde{K})|/\tr(K)$ is shown in the right column of Figure~\ref{fig:vfe-approx}. On the Abalone dataset (Figure~\ref{fig:vfe-approx}, top right panel), all methods start from a similar relative trace-norm error and display the same hierarchy as we have observed it for $\mathcal{L}(\widetilde{K})$, from the smallest ranks up to convergence. $\Delta$-VFE pivoted Cholesky achieves results better or identical to RPCholesky throughout. The largest gap from $\Delta$-VFE to all the other methods is given at $r=32$, where $\Delta$-VFE attains a 15.88\% and 79.45\% lower relative trace-norm error than RPCholesky and classical pivoted Cholesky.   

On QM7 (Figure~\ref{fig:vfe-approx}, bottom right panel), the same hierarchy is visible. Throughout $\Delta$-VFE attains the lowest relative trace-norm error at the smallest ranks, with the gap to RPCholesky reaching 15.19\% at $r=1$ and 9.17\% at $r=4$, before the two methods become visually indistinguishable across the mid rank regime. The gap to greedy pivoted Cholesky remains substantial throughout, exceeding 30\% for $r\leq 256$ and peaking at $r=8$ with 59.06\%.

Since $\Delta$-VFE uses the same diagonal-residual sampling distribution as RPCholesky, and reduces exactly to RPCholesky for $s=1$, it can be interpreted as an objective-aware generalization of RPCholesky. While this does not imply a priori dominance in trace error, the experiments show that $\Delta$-VFE maintains trace approximation quality comparable to RPCholesky while improving the target objective at low to moderate rank. This behaviour is consistent with the gain formulation in Theorem~\ref{thm:per-pivot-gain}, where the data-fit term $b_p$ contributes most when the current approximation $\widetilde{K}_i$ is far from $K$ and the residual $\bm{y} - \tilde{\bm{\mu}}_i$ contains significant structure. As the rank increases and $\widetilde{K}_i \to K$, the advantage diminishes and all methods approach the same objective value, which is consistent with Proposition~\ref{lem:monotone}.

\subsection{Prediction Error}\label{subsec:predError}

\begin{figure}[t]
    \centering
    \includegraphics[width=\linewidth]{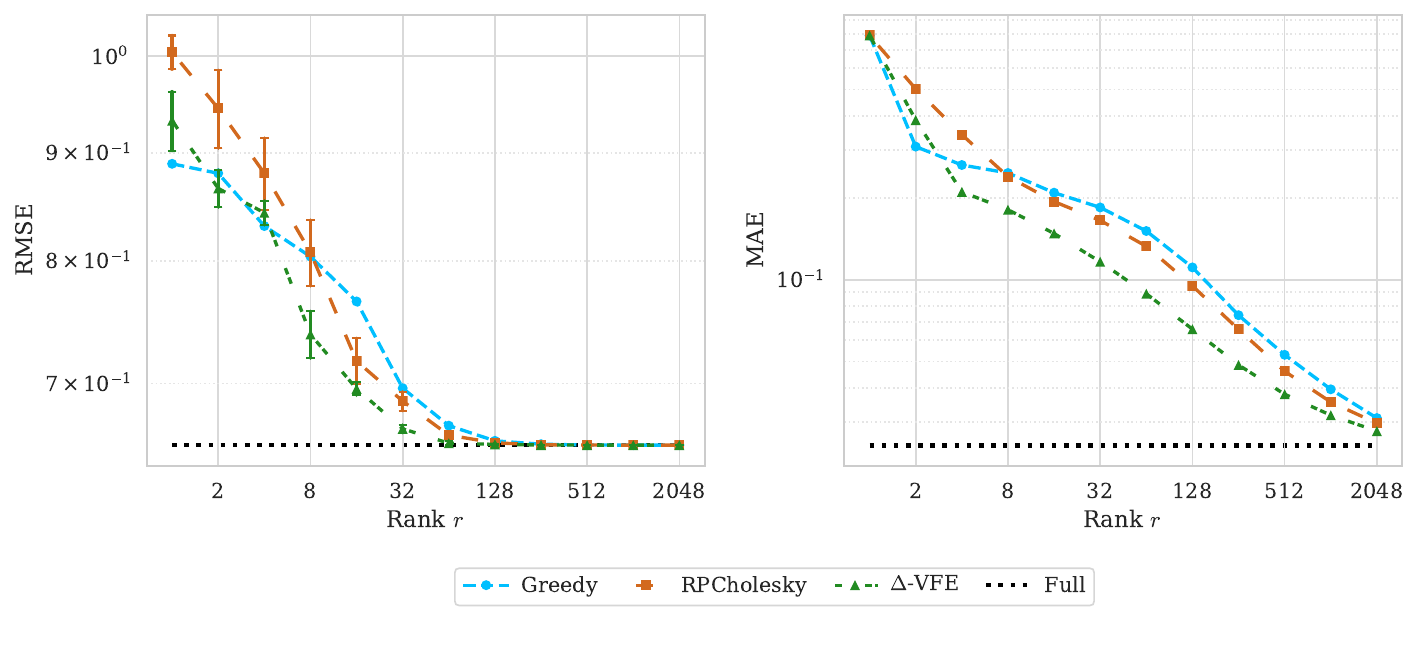}
    \caption{Comparison of the respective prediction errors on Abalone (left) using the Gaussian kernel and QM7 (right) using the Laplacian kernel. We include the baseline, which is obtained by using all $n$ data points for the respective datasets.}
    \label{fig:qm7abalone_prediction}
\end{figure}

We evaluate the predictive performance on the held-out 20\% test data points using the \textit{root mean square error} (RMSE) for Abalone and \textit{mean absolute error} (MAE) for QM7, the latter being standard for this dataset~\cite{rupp.qm7}. These are defined as
\begin{equation*}
    \text{RMSE} = \sqrt{\frac{1}{m}\sum_{i=1}^m (y_{i,\star} - \mu_{\star,i})^2} \quad \text{and} \quad \text{MAE}= \frac{1}{m} \sum_{i=1}^m |y_{\star,i} - \mu_{\star,i}|,
\end{equation*}
where  $y_{\star,i}$ denotes the $i$-th component of the true output $\bm{y}_\star$ and $\bm{\mu}_\star$ the prediction obtained from the respective low-rank approximation as defined in Section~\ref{sec:basics}, and $m$ is the number of test data points. The results are shown in Figure~\ref{fig:qm7abalone_prediction}

On Abalone (Figure~\ref{fig:qm7abalone_prediction}, left panel), greedy pivoted Cholesky attains the lowest RMSE at $r=1$ and is matched by $\Delta$-VFE pivoted Cholesky at $r=2$ and $r=4$. From $r=8$ onwards $\Delta$-VFE yields a significantly lower RMSE than both greedy pivoted Cholesky and RPCholesky, with the largest gap occurring at $r=8$, where the RMSE is 5.55\% and 5.98\%, respectively. From $r=16$ the hierarchy observed in the previous analysis reappears and persists up to convergence. 

On QM7 (Figure~\ref{fig:qm7abalone_prediction}, right panel), all methods yield a similar MAE at $r=1$, and greedy pivoted Cholesky yields the lowest MAE at $r=2$. The rate of convergence then reduces, and at $r=4$, $\Delta$-VFE achieves a 20.43\% lower MAE than greedy pivoted Cholesky and a 38.37\% lower MAE than RPCholesky. From $r=8$ onwards, the hierarchy of the previous analysis is recovered. 

The analysis on these two dataset reveals that our data-aware pivot rule yields improved errors for the GPR task, with the largest improvements at low to moderate ranks. 

\subsection{Influence of the Batch Size}\label{subsec:batch size}

\begin{figure}
    \centering
    \includegraphics[width=\linewidth]{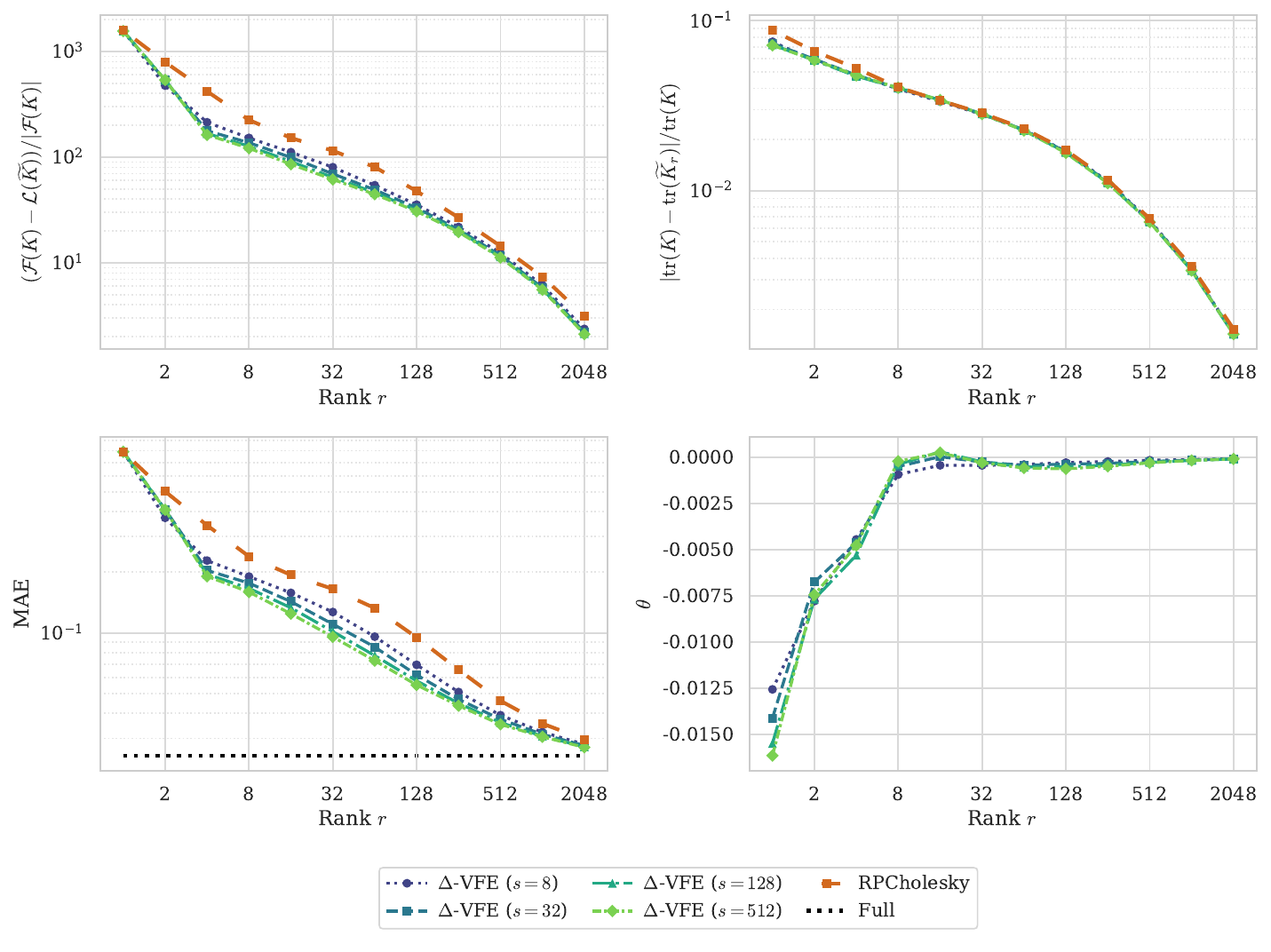}
    \caption{The influence of choosing different batch sizes $s$ for the $\Delta$-VFE pivot rule using $s=8,~32,~128,~512$ on QM7. The top left panel shows the evaluation of $\mathcal{L}(\widetilde{K})$ compared to the full baseline $\mathcal{F}(K)$. The top right panel shows the relative trace-norm error, while the bottom right panel offers a deeper insight by displaying the difference of the achieved trace-norm errors of $\Delta$-VFE and RPCholesky as $\theta$. The bottom left panel compares the predictive performance, where we have included the baseline that we get by using all $n$ data points.}
    \label{fig:batchsize}
\end{figure}

Finally, we examine the influence of the batch size $s$ on the QM7 dataset, displayed in Figure~\ref{fig:batchsize}. We use the batch sizes $s \in \{8,\,32,\,128,\,512\}$. As noted in Section~\ref{sec:realization}, $\Delta$-VFE reduces to RPCholesky for $s=1$. Therefore, we display not only $\Delta$-VFE pivoted Cholesky, but RPCholesky as a reference as well. The whole comparison is given in Figure~\ref{fig:batchsize}.

We first consider the results of the relative VFE gap (Figure~\ref{fig:batchsize}, top-left panel). All batch sizes yield a smaller relative gap than RPCholesky across all evaluated ranks, with the difference relative to RPCholesky dominating the differences between batch sizes. Among the $\Delta$-VFE configurations, the relative gap decreases monotonically with $s$ at every rank. Among the $\Delta$-VFE configurations, the relative gap is generally smaller for larger $s$, with the ordering becoming consistent from $r=4$ onwards. The largest spread between batch sizes occurs at $r=4$, where the relative gap for $s=8$ is 31.47\% larger than for $s=512$ and that for $s=32$ is 9.15\% larger.

The relative trace-norm error (Figure~\ref{fig:batchsize}, top right panel), behaves similarly, with the curves for different batch sizes barely distinguishable. The largest deviation between batch sizes occurs at $r=1$, where $s=8$ shows an approximation error roughly 5\% higher than $s=512$. We give a detailed view of this behaviour in the bottom right panel, by plotting difference of the relative trace-norm errors  between the $\Delta$-VFE realizations and RPCholesky, i.e.
\begin{equation*}
    \theta := \frac{|\tr(K)-\tr(\widetilde{K}_{\Delta-\text{VFE}})|-|\tr(K)-\tr(\widetilde{K}_{\text{RPCholesky}})|}{\tr(K)}.
\end{equation*}
A negative value of $\theta$ indicates that $\Delta$-VFE pivoted Cholesky attains a lower approximation error than RPCholesky. The batch size $s=8$ yields strictly lower trace-norm errors than RPCholesky across the displayed range, while $s=32$, $s=128$ and $s=512$ are slightly worse than RPCholesky at target rank $r=16$. However, from $r=32$ onwards, all batch sizes yield trace-norm errors at most equal to RPCholesky.

Regarding the prediction error (Figure~\ref{fig:batchsize}, bottom left panel), all batch sizes attain lower MAE values than RPCholesky. From $r=4$ onwards, larger batch sizes yield lower MAE montonically in $s$. Nevertheless, with diminishing returns. The gain from $s=8$ to $s=32$ is larger than the gain we obtain from $s=128$ to $s=512$.

Taken together, these results demonstrate that increasing $s$ yields improvements in the relative VFE gap and predictive performance with diminishing returns, while even the smallest batch size $s=8$ already attains a substantial fraction of the achievable gain over RPCholesky and also outperfomrs it over all metrics. These findings support choosing $s\ll n$ in practice. In this case $s$ acts as a small constant within the overall time complexity $\mathcal{O}(snr^2)$, which we derived in Section~\ref{sec:realization}, rather than a factor scaling with $r$ or even with $n$.

%% file: Sections/08_conclusion_outlook.tex
\section{Conclusion}

Classical greedy and randomized pivoted Cholesky factorizations select pivots to reduce the trace norm of the residual $T_r = K - L_r L_r^\top$, a criterion that misaligns with applications where $K$ appears inside a nonlinear functional involving additional terms beyond the trace. We addressed this gap for the variational free energy $\mathcal{L}(\widetilde{K})$ by deriving the exact per-pivot gain $\Delta\mathcal{L}_p$ under Cholesky-consistent rank-1 updates in closed form. The gain decomposes additively into complexity, data-fit, and trace contributions, where the trace contribution coincides with the exact one-step trace reduction rather than the diagonal proxy used by classical pivoted Cholesky. Selecting pivots greedily by $\Delta\mathcal{L}_p$ produces a monotonically non-decreasing sequence of $\mathcal{L}(\widetilde{K})$ values and admits evaluation on a batch of s candidates at $\mathcal{O}(snr^2)$ cost via incremental Woodbury updates, matching the asymptotic complexity of RPCholesky up to the batch factor. Numerical experiments on Abalone and QM7 confirm that $\Delta$-VFE pivoted Cholesky consistently achieves higher $\mathcal{L}(\widetilde{K})$ values and improved predictive accuracy at low to moderate ranks across both kernel types, while preserving the kernel approximation quality of RPCholesky and improving over classical greedy pivoted Cholesky.
Several directions remain open. The theoretical analysis of convergence behaviour and the role of the batch size as an algorithmic parameter are natural next steps. The interaction between objective-aware pivot selection and hyperparameter choice, as well as the extension of the framework to other matrix functionals, are further directions of interest.
\newpage

%% file: Sections/acknowledgment.tex
\section*{Acknowledgments}

In order to improve the readability and language quality of this manuscript, DeepL Write was utilized during the writing process. It is important to note that the
content and ideas presented in the manuscript were not generated by this or similar tools, and the authors take full responsibility for the content. Their use was focused solely on improving fluency and correcting spelling errors. A final review and correction of the manuscript was performed by the authors to ensure accuracy and consistency.

%% file: Sections/09_appendix.tex
\section{Incrementally Updating Woodbury Inverse} \label{app:woodburyInverse}
To keep the complexity of algorithm~\ref{alg:vfe_batch_pc} in a competitive regime to existing methods, we update $B$ incrementally, which is specified in Lemma~\ref{lem:woodbury_update}. This is done via the Schur complement. We give the exact derivation and complexity in the following.
\begin{proof}[Proof of Lemma~\ref{lem:woodbury_update}]
    For $L_{i+1}^\top L_{i+1}$ holds
    \begin{equation*}
        L_{i+1}^\top L_{i+1} = 
        \begin{pmatrix}
            L_i^\top L_i & L_i^\top \bm{u}_p \\
            \bm{u}_p^\top L_i & \bm{u}_p^\top \bm{u}_p
        \end{pmatrix}
    \end{equation*}
    and we get 
    \begin{equation*}
        M_{i+1} = \left(I +  L_{i+1}^\top L_{i+1}/\sigma_\varepsilon^{2} \right) =
        \begin{pmatrix}
            I + L_i^\top L_i/\sigma_\varepsilon^{2} & L_i^\top \bm{u}_p/\sigma_\varepsilon^{2} \\
            \bm{u}_p^\top L_i/\sigma_\varepsilon^{2}  & 1 + \|\bm{u}_p\|_2^2/\sigma_\varepsilon^2
        \end{pmatrix}
        =
        \begin{pmatrix}
            M_i & \bm{c} \\
            \bm{c}^\top & \xi
        \end{pmatrix},
    \end{equation*}
    with $\bm{c} := L_i^\top\bm{u}_p/\sigma_\varepsilon^2$ and $\xi := 1 + \|\bm{u}_p\|_2^2/\sigma_\varepsilon^2$.
    Since $M_i \succ 0$, we use the Schur complement block inverse and obtain
    \begin{equation*}
        M_{i+1}^{-1} = 
        \begin{pmatrix}
            M_i^{-1} + M_i^{-1}\bm{c}\psi^{-1}\bm{c}^\top M_i^{-1} & - M_i^{-1}\bm{c}\psi^{-1} \\
            -\psi^{-1}\bm{c}^\top M_i^{-1} & \psi^{-1} 
        \end{pmatrix},
    \end{equation*}
    where $\psi := \xi - \bm{c}^\top M_i^{-1} \bm{c}$ is the Schur complement of $M_i$ in $M_{i+1}$. Substituting $B_i := M_i^{-1}$ and $\gamma := \psi^{-1}$ yields
    \begin{equation*}
        B_{i+1} = 
        \begin{pmatrix}
            B_i + \gamma B_i\bm{c}\bm{c}^\top B_i & - \gamma B_i\bm{c} \\
            -\gamma\,\bm{c}^\top B_i & \gamma 
        \end{pmatrix},
    \end{equation*}
    which is the desired form. Finally, since $L_{i+1}^\top L_{i+1} \succeq 0$, we have $M_{i+1} = I + L_{i+1}^\top L_{i+1}/\sigma_\varepsilon^2 \succ 0$. The Schur complement of a positive block in a positive-definite matrix is itself positive, so $\psi > 0$ and hence $\gamma = \psi^{-1} > 0$.
\end{proof}

\section{Derivation of Complexity of $\Delta$-VFE pivoted Cholesky}\label{app:complexity}
In the following, we give the derivation of complexity 
\paragraph{Time Complexity}
We start with Algorithm~\ref{alg:apply_sigma_inv}, which involves a matrix-vector multiplication of $L^\top \in \R^{i \times n}$ and $\bm{v}\in\R^n$ in line 1 and is of complexity $\mathcal{O}(ni)$. In line 2 a $(i\times i)$-dimensional matrix is multiplied with a $i$-dimensional vector and thus is of complexity $\mathcal{O}(i^2)$. Finally, we need one last matrix-vector multiplication $L\bm{v}$  for line 3, which gives once again $\mathcal{O}(ni)$. The overall complexity for algorithm~\ref{alg:apply_sigma_inv} therefore is $\mathcal{O}(ni + i^2)$. 

For the analysis of Algorithm~\ref{alg:vfe_batch_pc}, where we first start with initialization of the quantities to be maintained, including $\bm{d}$ in line 3. This requires a full-diagonal scan of the kernel matrix $K$, leading to $\mathcal{O}(n)$. We then look at the outer loop given by iteration $i$. Lines 5 to 7 are dominated by accesses and are therefore of $\mathcal{O}(n)$. In line 8, we apply algorithm~\ref{alg:apply_sigma_inv}, which is of cost $\mathcal{O}(ni+i^2)$. 

Turning towards the inner loop, i.e.~the calculations we need for each candidate pivot in the current batch $\mathcal{S}$. For each candidate $p \in \mathcal{S}$, we need the residual vector $\bm{t}_p$ in line 10, which includes forming the $p$-th column of $K$, which is of complexity $\mathcal{O}(n)$. Further, computing $LL_{p,:}^\top$ leads to $\mathcal{O}(ni)$. So the overall cost for $\bm{t}_p$ is $\mathcal{O}(ni+n)$. For $\bm{u}_p$ in line 11 we then need to scale an $n$/dimensional vector $\bm{t}_p$, which is of cost $\mathcal{O}(n)$. To obtain $\bm{w}_p$ we once again apply algorithm~\ref{alg:apply_sigma_inv}, which adds $\mathcal{O}(ni +i^2)$ to the cost per candidate pivot. For $a_p$ and $b_p$, we need two dot products that leads to $\mathcal{O}(n)$. Summing up everything for each of the $s$ candidates we get 
\begin{align*}
    \mathcal{O}((ni+n) + (ni + i^2) + n) = \mathcal{O}(ni + i^2 +n).
\end{align*}
Consequently, the overall cost for all candidates is
\begin{align*}
    \mathcal{O}(s(ni+i^2 + n)).
\end{align*}

After choosing $p_\star$, which is of complexity $\mathcal{O}(s)$, we need to recompute $\bm{t}$ and $\bm{u}$, leading to $\mathcal{O}(ni+n)$ for lines 18 and 19. We perform the update of $B$ incrementally in lines 20 to 22, we gave details on how this can be done in the appendix~\ref{app:woodburyInverse}. This adds another $\mathcal{O}(i^2)$ and updating the diagonal $\bm{d}$ in line 24 costs $\mathcal{O}(n)$. Hence, the update cost is
\begin{align*}
    \mathcal{O}(ni+i^2 +n).
\end{align*}
Combining everything gives us the per-iteration cost 
\begin{align*}
    T_i &= \mathcal{O}(n + (ni + i^2) + s(ni+i^2+n) + (ni + i^2 +n)) \\
    &= \mathcal{O}(sni+si^2 +sn + ni+ i^2 +n).
\end{align*}
We assume that $n\gg i$ and moderate $s$ and thus get the per-iteration dominant term $T_i = \mathcal{O}(sni)$.
To get the overall costs, we sum $T_i$ over $i=0,\hdots, r$.
\begin{align*}
    \sum_{i=0}^{r-1}\mathcal{O}(sni) = \mathcal{O}\left( sn\sum_{i=0}^{r-1}i\right) = \mathcal{O}\left(sn\frac{r(r-1)}{2} \right) = \mathcal{O}(snr^2).
\end{align*}
Then the next order terms are 
\begin{align*}
    \sum_{i=0}^{r-1} \mathcal{O}(si^2) = \mathcal{O}(sr^3), \quad \sum_{i=0}^{r-1}\mathcal{O}(sn) = \mathcal{O}(snr).
\end{align*}
Thus, the overall costs are
\begin{equation*}
    T_{\text{total}} = \mathcal{O}(snr^2 + sr^3 + snr) = \mathcal{O}(snr^2 + sr^3).
\end{equation*}
If $n \gg r$, which corresponds exactly to the case when we expect low-rank approximations to be successfully applied, then the leading term is
\begin{equation*}
    T_{\text{total}} = \mathcal{O}(snr^2).
\end{equation*}
Thus, we preserve the asymptotic computational structure of RPCholesky regarding time complexity up to a small batching factor $s$. We give insights on the influence of the batch size in Section~\ref{sec:experiments}.
\paragraph{Space Complexity}
During the execution of $\Delta$-VFE algorithm we must maintain the approximate Cholesky-factor $L \in \R^{n\times r}$ and $\bm{d} \in \R^n$, which are the same quantities that we need to maintain for pivoted Cholesky and RPCholesky. Additionally, we need to keep track of $B \in \R^{r\times r}$. Therefore, the overall space complexity of $\Delta$-VFE is $\mathcal{O}(nr + r^2)$. This is under the assumption that we evaluate the necessary entries of $K \in \R^{n \times n}$ on the fly, otherwise, if $K$ is precomputed the space complexity for all methods is $\mathcal{O}(n^2)$.